\documentclass[12pt]{article}
\newcommand{\nocopyright}{
No Copyright\thanks{
The authors hereby waive all copyright
and related or neighboring rights to this work,
and dedicate it to the public domain.
This applies worldwide.
}}
\title{Spectral invariants and playing hide-and-seek on surfaces}
\author{Peter G.\ Doyle \and Jean Steiner}
\date{Original dated 27 September 2005\\ minor revisions dated 2009
\\ \nocopyright
}

\usepackage{amsmath,amscd,amssymb, amsthm, color, graphics,  color,epsfig,bm,epsf,times}
\newcommand{\note}[1]{}
\theoremstyle{plain}
\newtheorem{theorem}{Theorem}[section]
\newtheorem{lemma}[theorem]{Lemma}
\newtheorem{prop}[theorem]{Proposition}
\newtheorem{cor}[theorem]{Corollary}
\theoremstyle{definition}
\newtheorem{definition}[theorem]{Definition}

\theoremstyle{remark}

\newtheorem{rmk}[theorem]{Remark}


\def\bi{\begin{itemize}}
\def\ei{\end{itemize}}
\def\ben{\begin{enumerate}}
\def\een{\end{enumerate}}
\def\be{\begin{equation*}}
\def\ee{\end{equation*}}
\def\bea{\begin{eqnarray*}}
\def\eea{\end{eqnarray*}}
\def\bml{\begin{multline*}}
\def\eml{\end{multline*}}
\def\ie{{\it i.e.\ }}

\newcommand{\dif}{\mathrm{d}}
\newcommand{\w}{w}

\newcommand{\1}{\mathbf{1}}
\newcommand{\ones}{\mathbf{\hat{1}}}
\newcommand{\rn}{\mathbb{R}^n}

\begin{document}
\maketitle
\begin{abstract}
We prove  the expected duration of a game of hide-and-seek played on a Riemannian manifold under the laws of Brownian Motion is a spectral invariant:  it is a zeta-regularized version of the `trace' of the Laplacian.   An analogous hide-and-seek game may be played on Markov chains, where the spectral invariant that emerges is a classical quantity known as Kemeny's constant.  We develop the analogies between the two settings in order to highlight the connections between the regularized trace and Kemeny's constant.   Our proof relies on  the connections between Green's functions and expected hitting times, and the fact that the regularized trace may be approached via the Green's function. 

\vspace{0.5cm}
{\bf AMS 2000 Mathematics Subject Classification }\quad
58J65, 60J05, 58J50
\end{abstract}

\vspace{1em}
\begin{center}
\fbox{
\begin{minipage}{20em}
\noindent
\emph{
Why bother about journals? The arXiv is where math research lives.
}

\vspace{0.5em}
\noindent
\emph{
 ---  Oded Schramm, 12 April 2008
}
\end{minipage}
}
\end{center}
\vspace{1em}

\section{Introduction}
The (regularized) trace of the Laplacian is a geometric spectral invariant which has been shown to have interesting extremal properties.  In \cite{Morpurgo}, Morpurgo showed that among all metrics conformally equivalent to the standard round metric on the sphere, the regularized trace is minimized at the standard round metric.  More recently, in \cite{Okikiolutrace}, Okikiolu has shown that the infimum of the trace within the conformal class of any metric with a fixed volume is bounded above by the trace of the standard round sphere with the same fixed volume.   The trace is closely related to a sharp Sobolev inequality, as is laid out in both \cite{Morpurgo} and \cite{Okikiolutrace}, and in some sense these results suggest that the trace `hears' the sphericalness of the surface.  Analogous statements hold on higher dimensional manifolds for the trace of  the Paneitz operator, a  higher order elliptic operator, and the connection to a Sobolev inequality persists in the higher dimensional setting.

In this paper we will give a probabilistic interpretation of the trace.  We will describe two hide-and-seek games in two different settings.  The first setting is that of a  finite state Markov chain,  the operator of giving rise to our spectral invariant will be $I-P$, where $P$ is the transition matrix for the Markov chain, and the second setting is that of a surface, in which case the analogous operator is the Laplacian, $\Delta$.  In both settings, the operators $(I-P)$ and $\Delta$ have discrete eigenvalues, and the operators capture the difference between the value of the function at a point and a weighted  average at the   neighbors, and in both cases we will need to consider the appropriate Green's functions and approximate inverses to $(I-P)$ and $\Delta$, in order to analyze the hide-and-seek games.  In both settings, we will see that the expected duration of the first of our hide-and-seek games is a spectral invariant, and the expected duration of the second game is the density for the spectral invariant.    

Now, for the remainder of the introduction, we will describe the two hide-and-seek games, and then  interpret a spectral invariant on surfaces and Markov chains in terms of the expected durations of the games .     In each of the hide-and-seek  games we consider, one player's location is chosen deterministically and the  other  is chosen at random, according to the  equilibrium  measure.  In order to play this game on a finite state, discrete time Markov chain with $n$ states, given by $\mathcal{S}=\{1,2,\ldots, n \}$, the seeker will begin at a state $i\in \mathcal{S}$, and the hider will be at a state $j\in\mathcal{S}$.   On a surface, $M$, the seeker will begin at a point $x\in M$, and the hider will be at a point $y\in M$    On a surface, the probability of hitting a particular point is $0$, so the seeker can't find $y$.  However, the seeker can, with probability one,  hit a ball of radius $\epsilon$ around a point, so the hide-and-seek   games will end when the seeker, having started out  at the point $x$,  comes within  a distance of $\epsilon$ from the hider at $y$ (\ie  the seeker must reach the epsilon ball, $B_\epsilon(y)$).    The two games that will be considered are as follows:
 \begin{description}
 \item[Game I]\label{i.gameI}  The seeker is deterministically located at point $x\in M$, or at state $i\in \mathcal{S}$. The hider is  at a random location, $y\in M$ or $j\in \mathcal{S}$.
 \item [Game II]\label{i.gameII}  The seeker is randomly located at $x\in M$, or at state $i\in \mathcal{S}$ .  The hider is at deterministic   location, $y\in M$ or at state $j\in \mathcal{S}$.
 \end{description}
{\it A priori},  the expected duration of both games should depend on the deterministically chosen data (\ie the starting point for Game I, and the ending point for Game II), but we will see that Game I actually turns out to be independent of the deterministic data.

In order to give a more precise statement of our main results (and to give a bit of context for the interest in a spectral invariant such as the regularized trace), let's think spectrally about each of the two settings.
  In the case of surfaces, the Laplacian, $\Delta$ has a discrete spectral resolution with eigenvalues $\{\lambda_j\}_{j=0}^\infty$, and the eigenvalues are unbounded, with the lowest eigenvalue of $0$ corresponding to the constant functions, and asymptotic growth $\lambda_j\sim j$ for large $j$.   Through a regularization process, one can make sense of both the determinant and the trace of $\Delta$.    The determinant is a well-known spectral invariant that picks out geometric features such as constant curvature, since it is extremized at constant curvature as stated by  Osgood, Phillips and Sarnak, in \cite{Ops}.    While the story of the log-determinant  on surfaces, is reasonably well understood, investigations of the regularized trace have been undertaken more recently.  The  behavior of the regularized trace on the sphere is well-understood, but there is much to be understood about its behavior on other surfaces.     The regularized trace  is defined below in \eqref{e.defregzeta},  but heuristically one may think of it as $Tr'(\Delta^{-1})=\sum_{j\geq 1}' \frac{1}{\lambda_j}$, where we point out that $\Delta^{-1}$ only makes sense once one specifies its behavior on the null-space, and the prime reminds us that the sum does not actually converge.     In \cite{Morpurgo}, Morpurgo considers the  regularized trace and its behavior under conformal change, and he finds that it is extremized on the round sphere.  In \cite{steiner_duke}, Steiner gives a characterization of the density for the regularized trace.  In work of Chiu, in \cite{Chiu}, the regularized trace is shown to be linearly related   to the log-determinant for constant curvature tori, though the `trace' is not identified as such.  Additional work towards understanding the behavior of the regularized trace appears in work of Steiner and Doyle  in    \cite{skinnytori}, and also in forthcoming work by Okikiolu in \cite{Okikiolutrace}.     Additionally the density for the regularized trace  in connection with a condition for criticality of the log-determinant in work of Okikiolu \cite{Okikiolu} and Richardson \cite{Richardson}.

In this paper, our discussion of spectral invariants on surfaces will be complemented by a parallel discussion which takes place on a finite state, discrete time Markov chain.    If the Markov chain is irreducible and ergodic, the transition matrix $P=(p_{ij})$  will have eigenvalues $1> \nu_1\geq \nu_2 \geq \cdots \nu_{n-1}$.   Spectral invariants have also been considered  on Markov chains, see for example the survey of Aldous and Fill in \cite{Aldousfill}, and also work of Diaconis and Stroock in \cite{Diaconisstroock},  Levene and Loizou in \cite{Leveneloizou}, \footnote{There are many important references absent from this list.}.    The spectral invariant in the present discussion is the  `trace' for the operator $(I-P)^{-1}$, which  will need to be suitably regularized, since the operator $(I-P)$ has a one-dimensional null-space.  It turns out that a quantity called Kemeny's constant, which is denoted by $K^i=K$ and defined below in \eqref{e.kemconst} is exactly this regularized trace.    The quantity $K^i$, given below in \eqref{e.kemconst} is an expression given in terms of expected first hitting times, and, at first glance,  it appears  that $K^{i}$ depends on $i$.  However, $K^i$  was observed by Kemeny to be constant in \cite{Kemenysnell}.       It turns out that Kemeny's constant is essentially the regularized trace of the inverse, \ie $Tr(I-P)^{-1}=\sum_{j=1}^{n-1} \frac{1}{1-\nu_j}$ and this perspective is put forth by Meyer in \cite{Meyer}.  Since this quantity appears in a number of places, we point out that in his notes, \cite{Aldousfill}  Aldous discusses the constancy of $K^i$, which  is referred to in the `Random target identity', a phrase coined by Lov\'asz and Winkler in \cite{Lovaszwinkler}.   Doyle won a \$ 50 prize for providing  an `intuitively plausible' explanation for why $K^i$ is constant  in \cite{Doyle_Kemeny},  and Levene and Loizou \cite{Leveneloizou} give a physical interpretation of Kemeny's constant as the expected length of time it will take a random (web) surfer to reach the final destination web-page.    

Given the suggestive notation we're using here to describe these two regularized traces on Markov chains and on surfaces,  one might guess that there should be a connection between the regularized trace on surfaces, $Tr'(\Delta^{-1})=\sum_{j\geq 1}' \frac{1}{\lambda_j}$, and Kemeny's constant on Markov chains , $Tr'(I-P^{-1})=\sum_{j=1}^{n-1} \frac{1}{1-\nu_j}$.  Indeed, the hide-and-seek games give us a way to related these two quantities, by considering the  the expected durations of the  two games.

 Our main result is the following theorem, which  is a paraphrase of Theorem \ref{thm.hideandseek}, and Corollaries \ref{cor.expdurgameI} and \ref{cor.expdurgameII} . 
 \begin{theorem}\label{thm.hideandseekintro} Consider the two hide-and-seek games described above  on either a surface $(M, g)$ with volume $1$, or on a Markov Chain with states $\mathcal{S}$ and transition matrix $P$.
 \ben
 \item In both settings,  the expected duration for Game I, which starts deterministically at point $x\in M$ or state $i\in \mathcal{S}$ , is independent of the point $x\in M$ and $i\in \mathcal{S}$.  Moreover, on a Markov chain, the expected duration of the game is the spectral invariant Kemeny's constant, where $K=`Tr((I-P)^{-1})= \sum_{j=1}^{n-1} \frac{1}{1-\nu_j}$.  On surfaces, the expected duration of the game is essentially the regularized trace, up to a universal  correction depending on $\epsilon$.
\item    The expected duration for Game II, which ends deterministically at a point $y$ or a state $j$  is a density for the corresponding quantities in Game I, and in general, the expected duration of Game II does depend on the deterministically chosen data.
\een
\end{theorem}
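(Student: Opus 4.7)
The strategy is to develop the Markov chain and surface arguments in parallel, with Green's functions (resp.\ the fundamental matrix) as the common bridge between expected hitting times and spectral data. Game II will fall out of Game I by a Fubini interchange, so the substantive work lies in Game I in each setting.

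For a finite ergodic chain with transition matrix $P$ and stationary distribution $\pi$, Game I starting at $i$ has expected duration $K^i = \sum_j \pi_j\,\mathbb{E}_i[T_j]$. I would introduce the fundamental matrix $Z = (I - P + \mathbf{1}\pi^T)^{-1}$ and invoke the classical identity $\mathbb{E}_i[T_j] = (Z_{jj} - Z_{ij})/\pi_j$. Substituting gives
\[ K^i \;=\; \mathrm{Tr}(Z) - \sum_j Z_{ij} \;=\; \mathrm{Tr}(Z) - 1, \]
where the second equality uses $Z\mathbf{1} = \mathbf{1}$. This is manifestly independent of $i$, and since the spectrum of $Z$ consists of $1$ together with $(1-\nu_j)^{-1}$ for $j=1,\dots,n-1$, one obtains $K = \sum_{j=1}^{n-1}(1-\nu_j)^{-1}$, identifying Kemeny's constant with the desired regularized trace.

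For the surface, set $v_{y,\epsilon}(x) = \mathbb{E}_x[\tau_{B_\epsilon(y)}]$, so Game I's expected duration is $V_\epsilon(x) = \int_M v_{y,\epsilon}(x)\,dy$. The function $v_{y,\epsilon}$ solves $\Delta v = 2$ on $M\setminus B_\epsilon(y)$ with zero Dirichlet data on $\partial B_\epsilon(y)$. Using the volume-normalized Green's function $G$ (defined by $\Delta_x G(x,y) = \delta_y - 1$ with $\int_M G(x,y)\,dx = 0$), I would decompose $v_{y,\epsilon}(x) = -2G(x,y) + h_{y,\epsilon}(x)$ with $h_{y,\epsilon}$ harmonic on $M\setminus B_\epsilon(y)$. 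Matched asymptotics using the diagonal expansion $G(x,y) = \tfrac{1}{2\pi}\log(1/d(x,y)) + \rho(y) + o(1)$, with $\rho$ the Robin function, force $h_{y,\epsilon}(x) = c(\epsilon) + 2\rho(y) + o(1)$ uniformly on compact subsets of $M\setminus\{y\}$, where $c(\epsilon) = \tfrac{1}{\pi}\log(1/\epsilon)$ is universal. Integrating in $y$ and invoking $\int_M G(x,y)\,dy = 0$ annihilates all $x$-dependence, yielding
\[ V_\epsilon \;=\; 2\int_M \rho(y)\,dy + c(\epsilon) + o(1). \]
The finite part $2\int_M \rho$ is then identified with the zeta-regularized trace $\mathrm{Tr}'(\Delta^{-1})$ via the spectral representation $G(x,y) = \sum_{j\geq 1}\lambda_j^{-1}\phi_j(x)\phi_j(y)$ together with the heat-kernel/zeta machinery of \cite{Morpurgo, steiner_duke}; the $c(\epsilon)$ term is the advertised universal correction.

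Game II follows at once: writing its expected duration as $W_\epsilon(y) = \int_M v_{y,\epsilon}(x)\,dx$ on the surface (resp.\ $\sum_i \pi_i\,\mathbb{E}_i[T_j]$ on a chain), Fubini gives $\int_M W_\epsilon(y)\,dy = \int_M V_\epsilon(x)\,dx = V_\epsilon$, exhibiting $W_\epsilon$ as a density for the spectral invariant. Its $y$-dependence persists in general simply because a function is not determined by its integral; on an inhomogeneous surface the Robin function $\rho(y)$ already varies, with an analogous remark for generic chains. The main obstacle I anticipate is the $\epsilon\to 0$ matched-asymptotic analysis on the surface: namely, rigorously isolating the universal correction $c(\epsilon)$ and pinning the remaining finite part to the zeta-regularized trace. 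I expect this to come out by combining the Brownian exit-time/Green's function correspondence with the Robin-function description of the regularized trace developed in the cited works.
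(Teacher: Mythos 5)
Your proposal is correct and follows essentially the same route as the paper: hitting times expressed through the Green's function/fundamental matrix, Kemeny's constant identified as a trace on the chain side, and the Robin-mass decomposition of the Brownian exit time integrated against the equilibrium measure on the surface side, with the mass--trace relation of Morpurgo and Steiner supplying the final identification. The only differences are cosmetic: you use the Grinstead--Snell normalization $Z=(I-P+\mathbf{1}\pi^{T})^{-1}$ (hence the harmless $\mathrm{Tr}(Z)-1$ in place of the paper's $\mathrm{Tr}(Z)$ for Meyer's $Z$), the probabilist's $\tfrac{1}{2}\Delta$ convention (hence your factor of $2$ where the paper, by its stated rescaling, solves $\Delta H_{\epsilon}=1$), and a Fubini argument for Game II where the paper integrates the decomposition directly in $x$ to exhibit the density as $Vm(y)$ explicitly.
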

The density for the two regularized traces on surfaces and Markov chains may be given as a mass-like term in the Green's function, so Theorem \ref{thm.hideandseekintro} gives a probabilistic interpretation of the regularized trace on surfaces, as well as a probabilistic interpretation of the mass-like quantity in the Green's function.  In order to prove Theorem \ref{thm.hideandseekintro}, we will relate the expected hitting times to the Green's function for $\Delta$ on surfaces, and we will consider the Green's function for the operator $(I-P)$ on Markov chains.  

\paragraph{Organization}
In Section \ref{sec.background}, the relevant background for Markov chains and surfaces is detailed.    Throughout our exposition, we emphasize the analogous statements on Markov chains and on surfaces.  In each case, we consider an operator that gives the difference between the value of a function at a point, and a weighted average value of the function on it's neighbors, which is $(I-P)$ and $\Delta$.  We discuss the spectral resolutions and Green's functions for the relevant operators in each of the two settings.  While we are largely recording well-known background in Section \ref{sec.background}, we believe that the carefully laid out analogies do not appear explicitly in the literature, it seems worthwhile to carefully develop the analogies between the two settings.  In Section \ref{sec.hideseekmarkov}, we analyze the expected duration of the hide-and-seek games on Markov chains, and in Section \ref{sec.hideseeksurface} we consider the expected duration for surfaces.  In each of these two sections, we give a formulation of the expected first hitting times in terms of Green's functions, which we apply in computing the expected duration of each of the hide-and-seek games.    The main results are in Theorem \ref{thm.hideandseek}, and Corollaries \ref{cor.expdurgameI} and \ref{cor.expdurgameII}.

\section{Background}\label{sec.background}
In this section we give the necessary background in order to set-up our hide-and-seek games and their probabilistic interpretations on Markov chains and on surfaces.  We draw parallels between the analogous statements for the two settings throughout this discussion.
\subsection{Markov Chain background}
A  discrete,  time-homogeneous  Markov chain is a sequence of random variables, $X_0, X_1, X_2,\ldots$, valued in   $S=\{1,2,3,\ldots n\}$.  From the time-homogeneity and Markov properties, we know that the Markov chain is  characterized by initial distribution, $p_0$, and  transition matrix $P$ with $(P)_{ij}=P(i,j)\geq 0$  where:
\be
(P)_{ij}=P(i, j)=\mathbf{P}(X_k=j| X_0=i_0, X_1=i_1,\ldots X_{k-1}=i)=\mathbf{P}(X_k=j|  X_{k-1}=i)
\ee
In words, the $ij$ entry of the transition matrix, $P_{ij}$ gives the probability that the Markov chain will go from state $i$ to state $j$ at any given time step.   

\paragraph{Difference and averaging operator,  $(I-P)$}
We will consider the operator $I-P$, which gives the difference between the value of the function at a point, and the weighted average of the values at the neighboring states (here, a state $j$ is a neighbor if $P_{ij}\neq 0$). Consider the vector $\mathbf{f}=(f_j)_{j=1}^n$ to be a function, and compare the following equation to the analogous statement for the Laplacian on surfaces, which appears in \eqref{e.laplaceisaverage}  in the next section.
\begin{equation}\label{e.iminuspisaverage}
[(I-P) f]_i=f_i - \sum_{j}P_{ij} f_j
\end{equation}
In terms of electrical sources and potentials,  the operator $(I-P)$ maps potentials to source densities.\footnote{A careful treatment of the  connections between Markov Chains and electrical networks and potential theory is given by Doyle and Snell in \cite{RWEN} }
\paragraph{The spectrum of $(I-P)$}  
The matrix $P$ is row stochastic, so  $ \sum_{j=1}^{n} P_{ij}=1$ for all $ 1\leq i \leq n$, and the constant vector $\mathbf{1}$ is a right eigenvector for $P$, with eigenvalue $1$:
\be
P \mathbf{1}=\mathbf{1}
\ee
From linear algebra, we know that there will also be a right eigenvector for $P^T$ corresponding to the eigenvalue $1$, where $P^T w=w$, and taking the transpose will give rise to $w^T P=w^T$.  
\begin{definition} If the Markov chain is irreducible and ergodic, then the eigenvalue $1$ has multiplicity $1$, and there is a unique positive left eigen-measure, $w$,  which is  called the {\it equilibrium measure} and satisfies  
\be
w^T P = w^T , \hspace{10pt}  w_i>0, \hspace{10pt}\sum_i w_i =1
\ee
\end{definition}
We recall two interpretations of the equilibrium measure that we will exploit in the sequel:
\begin{prop}[See Grinstead and Snell, Ch. 11 \cite{Grinsteadsnell} ]\label{p.interpretw} For an irreducible, ergodic Markov chain $P$ with equilibrium measure $w^T$, we have 
\ben
\item  For large $n$, $w_j$ is the probability that $X_n=j$ (by the law of large numbers), and.
\begin{equation}  
P^n\rightarrow W \text{    as   } n\rightarrow \infty \text{  where  }  W=(W_i), \text{ with } W_i=(w^T) \end{equation}
\item  The expected first return time for $j$ is $\frac{1}{w_j}$.  
\een
\end{prop}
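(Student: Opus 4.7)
The plan is to prove the two parts separately, using a spectral decomposition for part (i) and an ergodic/renewal argument for part (ii).

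For part (i), I would first invoke the Perron--Frobenius theorem for irreducible, aperiodic stochastic matrices (which is what ``ergodic'' means in the paper's usage), giving that $1$ is a simple eigenvalue of $P$ and every other eigenvalue $\nu_j$ satisfies $|\nu_j| < 1$. I would then decompose $P = W + R$, where $W$ is the rank-one matrix all of whose rows equal $w^T$. Using $P \mathbf{1} = \mathbf{1}$, $w^T P = w^T$, and the normalization $w^T \mathbf{1} = 1$, one checks directly that $W^2 = W$ and $WR = RW = 0$, so $R = P - W$ has spectrum $\{\nu_1, \ldots, \nu_{n-1}, 0\}$ and therefore spectral radius strictly less than $1$. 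Hence $R^n \to 0$ and $P^n = W + R^n \to W$, whose $(i,j)$ entry is $w_j$; this is the claimed limit and also gives the law-of-large-numbers interpretation.

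For part (ii), I would combine part (i) with the regenerative structure at state $j$. Let $N_n(j) = \sum_{k=0}^{n-1} \mathbf{1}_{\{X_k = j\}}$ be the occupation count. Part (i), together with a standard upgrade to almost-sure convergence, yields $N_n(j)/n \to w_j$ almost surely under $\mathbf{P}_i$ for any starting state $i$. On the other hand, the successive inter-return times $T_j^{(1)}, T_j^{(2)}, \ldots$ to $j$ are i.i.d.\ under $\mathbf{P}_j$ by the strong Markov property, with common mean $\mathbf{E}_j[T_j]$ (finite, because finiteness of the state space and irreducibility force positive recurrence). The classical SLLN for the partial sums $S_k = T_j^{(1)} + \cdots + T_j^{(k)}$ gives $S_k/k \to \mathbf{E}_j[T_j]$, and inverting via the renewal-counting identity $N_n(j) \sim n/\mathbf{E}_j[T_j]$ yields $N_n(j)/n \to 1/\mathbf{E}_j[T_j]$. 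Equating the two limits gives $\mathbf{E}_j[T_j] = 1/w_j$.

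I expect the main obstacle to be the distinction between ``ergodic'' (as used here) and mere irreducibility: the pointwise convergence $P^n \to W$ in part (i) genuinely needs aperiodicity, since a period-$d$ chain has $d$ eigenvalues on the unit circle. If one had only irreducibility, the correct replacement is the Ces\`aro statement $\frac{1}{n}\sum_{k=0}^{n-1} P^k \to W$, which still suffices to drive the ergodic input to part (ii). The secondary technical point is upgrading mean convergence to almost-sure convergence in the occupation-count step, but for a finite-state chain this is immediate from Birkhoff's ergodic theorem applied to the shift on path space.
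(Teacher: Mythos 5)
Your proposal is correct, but there is nothing in the paper to compare it against: Proposition \ref{p.interpretw} is stated as imported background, with a pointer to Grinstead and Snell, and the authors give no proof of their own. Your argument is the standard one and is sound. For part (i), the identities $W^2=W$ and $WR=RW=0$ with $R=P-W$ do follow directly from $P\mathbf{1}=\mathbf{1}$, $w^TP=w^T$, $w^T\mathbf{1}=1$, and the resulting relation $P^n=W+R^n$ together with $\rho(R)<1$ gives the limit; note that $\rho(R)<1$ requires checking that every eigenvector of $P$ for an eigenvalue $\nu\neq 1$ is annihilated by $W$ (which holds since $w^Tv=\nu\, w^Tv$ forces $w^Tv=0$). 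For part (ii), the renewal argument (SLLN for the i.i.d.\ inter-return times, inversion via the counting identity, and matching against the occupation-frequency limit) is the classical proof, and positive recurrence is automatic for a finite irreducible chain. Your caveat about aperiodicity is well taken and is actually relevant to the paper: the pointwise convergence $P^n\to W$ and the later assertion that all eigenvalues other than $1$ lie in $(-1,1)$ both require aperiodicity, which is what the authors are packing into the word ``ergodic.'' The only stylistic difference from the cited source is that Grinstead and Snell establish $P^n\to W$ by an elementary contraction argument on the columns of $P^n$ rather than by spectral decomposition; your spectral route is shorter and meshes better with the rest of the paper, where the decomposition $\left(I-(P-W)\right)^{-1}-W$ and the eigenvalues $1-\nu_j$ are used throughout.
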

Also, if $P$ is irreducible and ergodic, then $P$ has eigenvalues $1>\nu_1\geq \nu_2 \cdots \nu_{n-1}>-1$. 
 
 \paragraph{Spectrum of $(I-P)$.}  Now, we will be interested in the operator $(I-P)$, which has eigenvalues $0, 1-\nu_1, 1-\nu_2\ldots 1-\nu_{n-1}$.  For an irreducible ergodic Markov chain, we know that the operator $(I-P)$  has a $1$-dimensional null-space, and its range is $(n-1)$ dimensional:
\begin{equation}\label{e.nullrangeiminusp}
(I-P)\1 = 0; \hspace{30pt}\text{ and  } \w^t (I-P)=0, \hspace{5pt} \text{ so } \forall v\in \mathbb{R}^{n}, \w^t(I-P)v=0 \hspace{10pt}\forall v\in \mathbb{R}^{n}
\end{equation}
Thinking electrically, the fact that $\mathbf{1}$ is in the null-space says that  if we start with a potential that is identical at each state, there will be no net current flowing anywhere at all, and so the source strength at each state will be $0$.  The second statement says that the source density corresponding to any potential $v$ must have total source strength $0$, where the source strength is measured with respect to the equilibrium measure $w$.  

\paragraph{The Green's function and inverting $(I-P)$}
We are interested in inverting the operator $(I-P)$, but now we can see that we can only hope to invert the operator on its range, namely  the $(n-1)$ dimensional subspace, $\{v \in \rn: v \perp w\}$.   Since $\mathbf{1}$ is in the null-space for $(I-P)$, the inverse will only be unique up to a choice of an additive constant.  In order to specify the Green's function,  which is given by the  matrix $G=(G_{ij})$,  we take a frame  of $n$ source densities, $S_j$ with  $S_j\in \rn\perp w$ corresponding to  having a source of strength $1$ at $j$, and a distributed sink of total  strength $1$.  The source density, $S_j$, is the analog to the source density on surfaces which is $\delta_x(y) -\frac{1}{V}$.  On Markov chains, $S_j$ may be  given in terms of the standard Euclidean basis vector $\mathbf{e}_j$ by:
\begin{equation}\label{e.deltaformarkov}
\mathbf{S}_j=\frac{1}{w_j} \mathbf{e_j} -\mathbf{1}, \text{   which satsifies   } \sum w_i S_j(i)=w_j\frac{1}{w_j} -\sum w_i = 0.
\end{equation}
 If we can find the potential corresponding to each of the source densities,  $S_j$, then we can  solve the equation $(I-P)u=f$ as long as $f\perp w$.   For each $j$, we would like to solve the following equation, which should be compared with the distributional equation for the Green's function on surfaces, given below in \eqref{e.distreqngxy}:
 \begin{equation}\label{e.disteqnforGij}
 [(I-P)G]_j = \mathbf{S}_j=\frac{1}{w_j} \mathbf{e_j} -\mathbf{1}    \text{  or  } (I-P)G=(\mathbf{S}_1 \mathbf{S}_2 \cdots \mathbf{S}_n)
  \end{equation}
 Since the constants are in the null-space for $(I-P)$, the Green's function will only be unique up to a choice of an additive constant.  We will   demand that $WG=0$, and Proposition \ref{p.interpretz}  will demonstrate that our choice  is probabilistically natural.  Moreover, this choice of normalization is the analogous choice to taking $\int G(x,y) \dif V(x)=0$ on surfaces, which ensures that $\Delta^{-1}: 1\rightarrow 0$.  In the Markov chain setting, $G$, the Green's function  exists, and the following Proposition gives a  useful concrete representation for the Green's function.
 \begin{prop}\label{p.greensfnonmarkov}[Meyer, \cite{Meyer}]  Consider  an irreducible and ergodic  Markov chain with transition matrix $P$, and (unique positive) equilibrium measure $w^T$, and let $W$ denote the rank one matrix with rows $W_j=w^T$, and consider the diagonal  matrix $D=Diag(\frac{1}{w_i})$  with entries $D_{ii}=\frac{1}{w_i}$, and $\mathbf{1}$ denote the $n\times n$ matrix where each entry is a $1$.  There  is a unique Green's function, denoted by $G=(G_{ij})$ which satisfies the following two requirements:
 \begin{equation}\left\{ \begin{array}{ll} 
 (I-P)G=D-\mathbf{1} & \text{  or } (I-P)G=(I-W)D  \label{e.inverseiminusp}\\
 WG=0 & 
 \end{array} \right.
\end{equation}
 The Green's function is given by:
 \begin{equation}\label{e.greensforp}
 G=\left( \left( I-(P-W)\right)^{-1} - W\right) D  
 \end{equation}
\end{prop}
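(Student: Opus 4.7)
The plan is to build the Green's function explicitly from a Neumann-type series, using the algebraic identities linking $P$ and $W$. The key algebraic observation is that
\[
PW = WP = W^{2} = W,
\]
all of which follow from $w^{T}P = w^{T}$, $P\mathbf{1} = \mathbf{1}$, and $w^{T}\mathbf{1} = 1$. From these, an easy induction yields $(P-W)^{k} = P^{k} - W$ for $k \geq 1$. Combined with the Perron-type convergence $P^{k}\to W$ given by Proposition~\ref{p.interpretw}, this shows $(P-W)^{k}\to 0$, so the spectral radius of $P-W$ is strictly less than $1$ and
\[
Z := \bigl(I - (P-W)\bigr)^{-1} = \sum_{k=0}^{\infty}(P-W)^{k}
\]
is well defined. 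Showing this invertibility is the first non-trivial step; everything afterwards is bookkeeping.

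Next I would verify the defining relations for $G := (Z-W)D$. For $(I-P)G = D - \mathbf{1}$, I first use $(I-P)W = W - PW = W - W = 0$ to reduce to $(I-P)Z\cdot D$. Then I rewrite $I - P = (I-(P-W)) - W$, so that
\[
(I-P)Z = \bigl(I-(P-W)\bigr)Z - WZ = I - WZ.
\]
Using $w^{T}(I-(P-W)) = w^{T} - w^{T}P + w^{T}W = w^{T}$, multiplication on the right by $Z$ gives $w^{T} = w^{T}Z$, hence $WZ = \mathbf{1}w^{T}Z = \mathbf{1}w^{T} = W$. Therefore $(I-P)Z = I - W$, and since $WD$ is the all-ones matrix (the $(i,j)$-entry is $w_{j}\cdot w_{j}^{-1}=1$), we obtain
\[
(I-P)G = (I-W)D = D - \mathbf{1},
\]
as required. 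For the normalization, $WG = W(Z-W)D = (WZ - W^{2})D = (W-W)D = 0$.

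For uniqueness, if $G'$ is a second matrix satisfying both conditions then each column of $G-G'$ lies in the null space of $I-P$, which by \eqref{e.nullrangeiminusp} is the one-dimensional span of $\mathbf{1}$. So $G-G' = \mathbf{1}c^{T}$ for some row vector $c^{T}$, and $W(G-G') = \mathbf{1}w^{T}\mathbf{1}c^{T} = \mathbf{1}c^{T}$ must vanish, forcing $c=0$.

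The genuinely substantive point in the argument is the invertibility of $I-(P-W)$, which rests on the identity $(P-W)^{k}=P^{k}-W$ together with ergodic convergence of $P^{k}$; once that is established, the verification that the explicit formula \eqref{e.greensforp} satisfies both defining conditions reduces to iterated use of $PW=WP=W^{2}=W$ and $w^{T}Z=w^{T}$. I expect no further obstacles beyond this spectral/algebraic setup.
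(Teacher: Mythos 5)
Your proof is correct, and it is more complete than what the paper offers: the paper simply cites Meyer for this proposition and never verifies the defining identities for $G$; the only proof it supplies in this neighborhood is for the probabilistic interpretation of the fundamental matrix. That said, the machinery you rely on is exactly the machinery the paper uses there --- the identity $(P-W)^k = P^k - W$ (via $PW=WP=W^2=W$), the ergodic convergence $P^k\to W$, and the resulting convergence of the Neumann series for $\left(I-(P-W)\right)^{-1}$ --- so your argument reads as the natural completion of the paper's sketch. Your verification of $(I-P)G = (I-W)D = D-\mathbf{1}$ via $(I-P)W=0$, the decomposition $I-P = \left(I-(P-W)\right)-W$, and $w^T\left(I-(P-W)\right)^{-1}=w^T$ is clean, as is the uniqueness argument from the one-dimensionality of the null space of $I-P$ together with the normalization $WG=0$. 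One small caution: you write $Z$ for $\left(I-(P-W)\right)^{-1}$, whereas the paper (following Aldous) reserves $Z$ for $\left(I-(P-W)\right)^{-1}-W$ in \eqref{e.definez}; your convention is the Grinstead--Snell one flagged in the paper's remark, so you should rename your auxiliary matrix to avoid a clash. This is cosmetic and does not affect the mathematics.
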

\begin{rmk}
The matrix $D$ may be thought of as playing the same role as the inverse metric plays in Riemannian geometry.  The inverse metric, raises an index by turning a covector into a  vector.    In the Markov chain setting, a co-vector is a (possibly signed) measure, and by acting on the right, the matrix $D$ turns a measure into a potential, which is a vector.   The following interpretation of the measure was also given by Meyer in \cite{Meyer}.  
\end{rmk}
\begin{prop}\label{p.interpretz}[Compare Theorem 3.2 of Meyer, \cite{Meyer}]  The normalization that we have chosen for the Green's function, given by $WG=0$, gives rise to a signed measure obtained from the Green's function:
\begin{equation}\label{e.definez}
Z=GD^{-1}=\left( \left( I-(P-W)\right)^{-1} - W\right)
\end{equation}
The expected number of excess visits to a state $j$, for a Markov chain with an initial distribution of $p_0=\mathbf{e}_i$, versus a Markov chain with  an initial probability distribution given by $p_o=w$ is given by $Z_{ij}$.   \end{prop}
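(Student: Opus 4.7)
The plan is to prove both halves of the proposition directly from the formula for $G$ given in Proposition \ref{p.greensfnonmarkov}, using the Neumann-series expansion of $(I-(P-W))^{-1}$ and a few algebraic identities involving $W$.

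First I would verify that the normalization condition $WG = 0$ is in fact satisfied by the $G$ from \eqref{e.greensforp}, so that the formula $Z = GD^{-1} = (I-(P-W))^{-1} - W$ is well-defined; this is immediate upon noting that $D^{-1}$ is diagonal with entries $w_i$ and multiplying the displayed formula for $G$ on the right by $D^{-1}$. The real content is establishing the probabilistic meaning of the entries $Z_{ij}$.

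The key algebraic facts are $WP = W$ (since the rows of $W$ are copies of $w^T$ and $w^T P = w^T$), $PW = W$ (since the row sums of $P$ equal $1$, each row of $PW$ equals $w^T$), and $W^2 = W$. From these I get, by a short induction, the identity
\begin{equation*}
(P-W)^k = P^k - W \quad \text{for every } k \geq 1.
\end{equation*}
Because the eigenvalues of $P-W$ are $0$ together with $\nu_1,\dots,\nu_{n-1}$, all strictly inside the unit disk, the Neumann series for $(I-(P-W))^{-1}$ converges, and combining this with the identity above yields
\begin{equation*}
Z = (I-(P-W))^{-1} - W = (I - W) + \sum_{k=1}^{\infty}\bigl(P^k - W\bigr) = \sum_{k=0}^{\infty}\bigl(P^k - W\bigr),
\end{equation*}
so that
\begin{equation*}
Z_{ij} = \sum_{k=0}^{\infty}\bigl((P^k)_{ij} - w_j\bigr).
\end{equation*}

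Finally I would unpack this entrywise identity probabilistically. The quantity $(P^k)_{ij}$ is the probability of being in state $j$ at time $k$ when the initial distribution is $p_0 = \mathbf{e}_i$, whereas $w_j$ is the probability of being in state $j$ at time $k$ when the initial distribution is $p_0 = w$ (by invariance of $w$). Summing over $k$ gives the expected number of visits to $j$ over all time under each initial distribution; each is infinite, but Proposition \ref{p.interpretw} guarantees the terms $(P^k)_{ij} - w_j$ decay geometrically, so the difference is a finite number, and that number is exactly the excess visits to $j$ when starting at $i$ rather than at equilibrium.

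The main obstacle is really conceptual rather than technical: one has to justify that the difference of two divergent expected-visit sums is meaningful. The spectral-gap estimate implicit in $|\nu_j|<1$ handles this cleanly, making the step from the closed-form formula to the probabilistic interpretation rigorous; the remaining computations are the short identities $WP=PW=W^2=W$ and the telescoping that turns $(P-W)^k$ into $P^k - W$.
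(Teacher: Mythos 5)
Your proposal is correct and is essentially the paper's own argument: both hinge on the identities $PW=WP=W^2=W$, the telescoped identity $(P-W)^k = P^k - W$, the resulting Neumann series $Z=\sum_{k\ge 0}(P^k-W)$, and the entrywise reading of $(P^k)_{ij}-w_j$ as the excess probability of occupying $j$ at time $k$, with convergence secured by the spectral gap $|\nu_j|<1$. The only difference is direction of travel --- the paper starts from the indicator-variable count of visits in the first $N$ steps and recognizes the truncated Neumann series, while you expand $Z$ first and then interpret --- which is not a substantive difference.
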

\begin{rmk}  Our notation for $Z=\left( I-(P-W)\right)^{-1} - W$ as the fundamental matrix  is consistent with Aldous' notation in \cite{Aldousfill}, however,  Grinstead and Snell in \cite{Grinsteadsnell} use a different normalization and their fundamental matrix is just $\left( I-(P-W)\right)^{-1} $.
\end{rmk}
\begin{rmk}  Speaking informally, if you spend a little extra time in a state $j_1$, you won't be able to spend quite as much time in the other states, so  the total number of excess visits to all of the states must be $0$.  Therefore, the probabilistic interpretation of Proposition \ref{p.interpretz} makes it clear that  $\sum_{j} Z_{ij}=0$.   
\end{rmk}
\begin{proof}

In order to verify the probabilistic interpretation of $Z_{ij}$, we need to consider two Markov processes, both governed by the original transition matrix $P$, but  having two different initial distributions:   $\mathcal{P}_{i\bullet}$ corresponds to having the initial distribution $p_o=\mathbf{e}_i$, which means the chain is started at state $i$ with probability $1$, and  $\mathcal{P}_{w\bullet}$ corresponds to having the initial distribution $p_o=w$, which means the chain is started at random with respect to the equilibrium measure $w$.  So, $\mathcal{P}_{ij}=P_{ij}$, which is the probability that the chain goes from state $i$ to $j$ in one step, and since $(w^TP)_j=w_j$, we know that $\mathcal{P}_{wj}=w_j$, which is the probabillity that the state will be in state $j$ after one step, given that it started at a random state chosen with respect to the equilibrium measure.  

Now, we introduce  a $0-1$ random variable $\chi^k_{ \alpha j}$, where  $\alpha$ is the initial distribution of the Markov chain, and 
\be
\chi^k_{ \alpha j}=\left\{ \begin{array}{ll}
1& X_k=j \text{ given } X_0 \text{ determined by  } p_0=\alpha \\
0 &  \text{ else }
\end{array}\right.
\ee
Now, since $P_{ij}^k$ is the probability that the Markov chain is in state $j$ at step $k$, given that it started in state $i$ (\ie given that $\alpha =\mathbf{e}_i$), we know that $\chi^k_{ \mathbf{e}_ij}=1$ with probability $(P^k)_{ij}$.  Since $w^T P^k=w^T$, we know that $\chi^k_{ wj}=1$ with probability $w_j$.  The total number of visits within the first $N$ steps for a process with initial distribution $\alpha $  would be $\sum_{k=1}^N  \chi^k_{ \alpha  j}$, so the expected excess number of visits to a site $j$ within the first $N$ steps for the two processes is given by: 
\begin{multline*}
\mathbb{E}[\sum_{k=0}^N \chi^k_{ \mathbf{e}_ij}-\sum_{k=0}^N \chi^k_{ wj}]=1\cdot  \delta_{ij}+1\cdot P_{ij}+1\cdot P^2_{ij}+\cdots1\cdot  P^N_{ij}-\stackrel{n}{\overbrace{\left(1\cdot w_j +1\cdot  w_j + \ldots + 1\cdot  w_j\right)}}\\= \delta_{ij}+P_{ij}+P^2_{ij}+\cdots P^N_{ij}-(N+1)w_j
\end{multline*}
Now, since this expression holds for any $j$, we may write it in terms of matrices, and since the sum is finite, we may rearrange the sum as follows:
\be
\mathbb{E}[\sum_{k=0}^N \chi^k_{ \mathbf{e}_i}-\sum_{k=0}^N \chi^k_{ w}]=[I-W+(P-W)+(P^2-W)+\cdots (P^N-W)]_{ij}
\ee
Now, by using the fact that $w$ is the stationary measure for $P$ and the related fact that  $PW=WP=W$, if  coefficients are  collected, one can show that $(P^k-W)=(P-W)^k$.  Now, we may re-write our expression for the expected excess visits within the first $N$ steps, and we can recognize a truncation of the Neumann series for the fundamental matrix, $Z$, defined in \eqref{e.definez}:
\bea
\mathbb{E}[\sum_{k=0}^N \chi^k_{ \mathbf{e}_i}-\sum_{k=0}^N \chi^k_{ w}]&=&[I-W+(P-W)+(P^2-W)+\cdots (P^N-W)]_{ij}\\
&=& [I-W+(P-W)+(P-W)^2+\cdots (P-W)^N]_{ij}\\
&=& \left[\left( \sum_{l=0}^N (P-W)^k \right)-W\right]_{ij}\\
\eea
Of course,  in order to compute the expected number of excess visits between the two processes, we must sum over all $l$, and so we have
\be
 \mathbb{E}[\text{ Excess visits to }j \text{ for  }\mathcal{P}_{ij}\text{ versus }\mathcal{P}_{wj}]=\left[\left( \sum_{l=0}^\infty (P-W)^k \right)-W\right]_{ij}=Z_{ij}
 \ee
 In case the reader is worrying about issues of convergence for this infinite series, we should observe here that the Law of Large Numbers for irreducible and ergodic Markov chains implies that $P^n\rightarrow W$, and since the other  eigenvalues for $P$ will be less than $1$, we can estimate $|P^n-W|\leq |\lambda_2|^n$, where $|\lambda_2|<1$, and  so  the Neumann series for $Z$ converges.
 \end{proof}

\subsection{Surface background}
The second story presented in this paper takes place on a  compact surface without boundary $(M,g)$, so we now introduce the same cast of characters that was just introduced in the context of Markov chains.     In order to set-up the notation, let  $\dif V$ denote the volume element, $\nabla$ denote the gradient, and  $\Delta = -div\cdot \nabla$ denotes the positive Laplacian operator corresponding to $g$.  If $(M,g)$ has volume $V$, then the corresponding probability measure will be $\frac{\dif V}{V}$. 
\paragraph{Difference and averaging operator, $\Delta$}
The operator $\Delta$ is analogous to the operator $(I-P)$ on Markov chains, since both operators give the difference between the value of the function at a point and the weighted average over  neighboring points.  In the surface setting, we recall that the Laplacian operator  gives the $\epsilon^2$ difference between a function $f$ at a point $x_0$, and the average of the neighbors on a sphere of radius $\epsilon$ by considering Taylor's theorem and writing the metric in normal coordinates at the point $x_0$.  This statement should be familiar from the behavior in $\mathbb{R}^n$, where the most familiar evidence of this averaging behavior is the  well-known `Mean Value Property' for a harmonic function: if $\Delta f = 0$ then  the value of the function at a point is equal to the average value on a sphere of radius $\epsilon$.  Note that we can compare the surface formulation of this averaging property to that of $(I-P)$, which was stated in  \eqref{e.iminuspisaverage}
:
\begin{equation}\label{e.laplaceisaverage}
 \Delta f(x_0) \epsilon^2 \approx f(x_0)-\frac{1}{Vol(S_\epsilon^{1})}\int_{S_\epsilon^{1}(x_0)} f(x) \dif S \text{  for  } f\in C^\infty(M)
\end{equation}

\paragraph{Spectrum of $\Delta$ and the regularized trace}
Since the Laplacian is a self-adjoint elliptic operator of order $2$, it has discrete eigenvalues $\{\lambda_j\}_{j=0}^\infty$ that 
  are bounded
below, with $0<\lambda_1\leq \lambda_2 \cdots$, and Weyl's law
describes the growth of the eigenvalues:
\begin{equation}\label{e.weylslaw}
\text{  Weyl's Law:  }\hspace{5pt}\lim_{j\rightarrow \infty} \frac{\lambda_j}{j}=\frac{4\pi}{Vol(M)} 
\end{equation}
  Now, in order to make  connections with a spectral invariant precise, we introduce   the operator $\Delta^{-s}$, which is  a linear operator on   $L^2(M, g)$.  A branch of the logarithm is fixed  so that $\lambda^s=e^{s\log \lambda }$, with $1^s=1$.  In order to define the regularized trace, we introduce the operator $\Delta^{-s}$.  From Weyl's law and the functional calculus, we see that  $\Delta^{-s}$ is a Hilbert-Schmidt operator for $\Re s\geq 1$.  If $\Re(s)>1$, then $\Delta^{-s}$ is trace-class, and the spectral Zeta function is defined to be the trace of $\Delta^{-s}$:
\begin{equation*}
Z(s)\stackrel{def}{=}Trace(\Delta^{-s})=\sum_{j=1}^{\infty} \lambda_j^{-s} \hspace{30pt} \Re s>1
\end{equation*}
The zeta function has  a simple pole at $s=1$, and it is well known that the zeta function may be continued to a meromorphic function defined 
 on the complex plane. In
\cite{Morpurgo}, Morpurgo considers the following spectral
invariant, which is  the regularized trace of  $\Delta^{-1}$:
\begin{definition} \label{d.regzeta} Assume $(M,g)$ is a closed compact surface with volume normalized to be $4\pi$, then the
{\it regularized trace of $\Delta$}, or the {\it regularized Zeta function at 1} is defined as follows:
\begin{equation}\label{e.defregzeta}
\tilde{Z}_g(1)=\lim_{s\rightarrow 1}
\left(Z(s)-\frac{1}{s-1}\right)
\end{equation}
\end{definition}
The regularized trace may also be approached via the Green's function for $\Delta$, as will be stated in Proposition \ref{p.massisspecdensity}.

\paragraph{The Green's function and $\Delta^{-1}$} As is the case for Markov chains, the Green's function allows us to invert the Laplacian, $\Delta$, a linear operator on the Sobolev space $H_2(M)\subset L^2(M,g)$.    The null-space and range are $\Delta$ are characterized by the following analog to the Markov chain situation described in \eqref{e.nullrangeiminusp}.  
\begin{equation}\label{e.nullrangedelta}
\Delta 1=0; \hspace{30pt} \int_M \Delta f \dif V=0 \forall f\in H_2(M)
\end{equation}

Again, the inverse can not be defined on all of $L^2(M, g)$, since the image of $\Delta$ consists of the functions that are orthogonal to the constants.  Additionally,  the inverse for $\Delta$ will only be unique up to the choice of a constant.  
As was the case on Markov chains, the Green's function, $G(x,y)$  is the potential corresponding to a source density that has a source strength of $1$ at a point $x$ and a distributed sink that has a total strength of $1$, which corresponds to solving the distributional equation \eqref{e.distreqngxy}.  We will choose the constant so that the $\int G(x,y)\dif V(x)=0$, or in otherwords $\Delta^{-1}$ takes constant functions to $0$.  The following proposition describes the Green's function on a surface.  While we cannnot write down the Green's function explicitly on a surface in the way that we wrote down the Green's function on a Markov chain in Proposition \ref{p.greensfnonmarkov}, the parametrics in \eqref{e.parametricsgf} is the next best thing.  Since the surface behaves locally like a piece of the plane, we expect to see the same logarithmic singularity that one sees for the Green's function on Euclidean space.  
\begin{prop}\label{p.greensfnonsurface}[See, for example, Aubin \cite{Aubin}] On a surface $(M, g)$ with volume $V$, there exists a unique Green's function, $G(x,y)$ which is the  integral kernel for the operator $\Delta^{-1}$, it is symmetric in $x$ and $y$, and satisfies the following two equations:
\begin{eqnarray}
&&\Delta_y^{distr} G(x,y)\stackrel{\dif V}{=} \delta_x(y) -\frac{1}{Vol(M)} \label{e.distreqngxy}\\
&&\int_M G(x,y) \dif V(y)=\int_M G(x,y) \dif V(x) = 0  
\label{e.normalizegreensfn} 
\end{eqnarray}
The Green's function is smooth away from the diagonal, and for small geodesic distance, $d(x,y)$, there is a logarithmic singularity, with:
\begin{equation}\label{e.parametricsgf}
G(x,y)=\frac{-1}{2\pi} \log(d(x,y))+m(x)+O(d(x,y)) 
\end{equation}
The term $m(x)=\lim_{y\rightarrow x} [G(x,y)+\frac{1}{2\pi} \log d(x,y)]$ is called the {\it Robin's mass}.
\end{prop}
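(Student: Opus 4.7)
The approach I would take combines a parametrix construction with elliptic theory on the orthogonal complement of constants. Since $\Delta$ is a self-adjoint elliptic operator of order two whose null space is exactly the constants, by \eqref{e.nullrangedelta} it restricts to a bijection from the mean-zero subspace of $H_2(M)$ onto the mean-zero subspace of $L^2(M)$, with compact inverse. The task is to show that this inverse is represented by a symmetric integral kernel with the prescribed logarithmic singularity. One might first try the spectral representation $\sum_{j\geq 1}\lambda_j^{-1}\phi_j(x)\phi_j(y)$, but in dimension two Weyl's law \eqref{e.weylslaw} prevents pointwise convergence on the diagonal, which is precisely where the logarithmic blow-up in \eqref{e.parametricsgf} must appear; this motivates a parametrix.

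In geodesic normal coordinates centered at $x$ the metric agrees with the Euclidean metric to second order, so $-\frac{1}{2\pi}\log d(x,y)$ is a local fundamental solution modulo a bounded kernel. Fix a smooth cutoff $\chi$ supported where the exponential map is injective, set
\begin{equation*}
P(x,y) = -\frac{\chi(d(x,y))}{2\pi}\log d(x,y),
\end{equation*}
and compute in normal coordinates to get $\Delta_y P(x,y) = \delta_x(y) - K(x,y)$, with $K$ a kernel that is bounded (indeed continuous) on $M\times M$ and smooth off the cut locus. Now apply the Fredholm alternative: since $K(x,y) + 1/\mathrm{Vol}(M)$ has zero $y$-mean for each $x$, there is a unique $H(x,\cdot)$ with $\int H(x,y)\,\dif V(y)=0$ satisfying $\Delta_y H(x,y) = -K(x,y) - 1/\mathrm{Vol}(M)$. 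Setting
\begin{equation*}
G(x,y) = P(x,y) + H(x,y) - c(x),
\end{equation*}
with $c(x)$ chosen to kill the $y$-mean of $P(x,\cdot)$, gives a function satisfying both the distributional equation \eqref{e.distreqngxy} and the normalization \eqref{e.normalizegreensfn}.

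Uniqueness up to an additive constant follows from the one-dimensional null space, and is then pinned down by the normalization. Symmetry is a consequence of the self-adjointness of $\Delta^{-1}$ on the mean-zero subspace: for mean-zero test functions $f,g$, both $\iint G(x,y)f(y)g(x)\,\dif V(y)\,\dif V(x)$ and its swap compute $\langle \Delta^{-1}f,g\rangle = \langle f,\Delta^{-1}g\rangle$, and uniqueness then forces $G(x,y)=G(y,x)$. Smoothness off the diagonal is standard interior elliptic regularity applied to $\Delta_y G(x,y) = -1/\mathrm{Vol}(M)$ for $y\neq x$. Finally, the parametrix expansion \eqref{e.parametricsgf} and the definition of the Robin mass $m(x) = H(x,x) - c(x)$ fall out immediately from the decomposition $G = P + H - c$.

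The main obstacle is the routine-but-fussy regularity bookkeeping that makes the expansion \eqref{e.parametricsgf} rigorous, in particular showing that the remainder $H(x,y)$ is jointly continuous (and smooth enough) in $(x,y)$ near the diagonal so that $\lim_{y\to x}[G(x,y)+\frac{1}{2\pi}\log d(x,y)]$ exists and defines $m(x)$ as a continuous function, and that the error in \eqref{e.parametricsgf} is genuinely $O(d(x,y))$. These facts require iterating the parametrix once more, using the metric expansion in normal coordinates to track how $\Delta$ interacts with the logarithmic singularity, and then applying Schauder estimates to the resulting equation for $H$; this is standard but is the only place where care is needed.
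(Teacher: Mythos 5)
The paper does not actually prove this proposition --- it is quoted from the literature (Aubin) --- and your parametrix construction is essentially the standard argument given there, so the overall approach is sound. Two small points. First, a sign slip in the Fredholm step: with your convention $\Delta_y P(x,y)=\delta_x(y)-K(x,y)$, integrating over $M$ gives $\int_M K(x,y)\,\dif V(y)=1$, so the correction must solve $\Delta_y H(x,y)=K(x,y)-\tfrac{1}{\mathrm{Vol}(M)}$ (whose right-hand side then has zero mean), not $-K(x,y)-\tfrac{1}{\mathrm{Vol}(M)}$ as written; nothing else changes. Second, the duality argument for symmetry only gives, a priori, that $G(x,y)-G(y,x)$ is orthogonal to products of mean-zero functions, i.e.\ equals $a(x)+b(y)$, and antisymmetry forces this to be $a(x)-a(y)$; your one-sided normalization $\int_M G(x,y)\,\dif V(y)=0$ alone does not immediately kill $a$. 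The cleanest repair is Green's second identity applied to $G(x,\cdot)$ and $G(z,\cdot)$ on $M$ minus two small geodesic balls, which yields
\begin{equation*}
G(x,z)-G(z,x)=\frac{1}{\mathrm{Vol}(M)}\int_M\left[G(x,y)-G(z,y)\right]\dif V(y)=0
\end{equation*}
directly from that normalization. With those repairs, and the regularity bookkeeping you already flag (a bounded right-hand side gives $H(x,\cdot)\in W^{2,p}\subset C^{1,\alpha}$, which suffices for the $O(d(x,y))$ remainder without iterating the parametrix), the proof is complete.
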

\paragraph{Comparing the two methods of regularizing the trace for $\Delta$}
 In the context of this paper, we will be  considering expected hitting times, which means we will be working with the Green's function, so it is worth recording the fact that the regularized trace may either be defined through the zeta function as in \eqref{e.defregzeta}, or by integrating the Robin's mass, which appears in \eqref{e.parametricsgf}.    The following proposition gives the precise relation between the two quantities:
\begin{prop}\label{p.massisspecdensity} [Morpurgo \cite{Morpurgo_Duke}, S. \cite{steiner_duke}]
Assume $(M,g)$ is a closed compact surface with volume $V$.  Then, the Robin's
mass $m(x)$ appearing in \eqref{e.parametricsgf} is a density for the spectral invariant
$\tilde{Z}(1)$ defined in \eqref{e.defregzeta}, and the regularized trace may be written in terms of the integral of the Robin's mass as follows:  
\begin{equation}\label{e.massisspecdensity}
\tilde{Z}(1)=\int_M  m (x) \dif V(x) +\frac{V}{4\pi}(-2\log 2+2\gamma)  \text{  with Euler gamma:  } \gamma\approx .5772
\end{equation}
\end{prop}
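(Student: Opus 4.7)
The plan is to realize both $\tilde Z(1)$ and $\int_M m\,\dif V$ as functionals of the heat trace of $\Delta$ and read off their difference. Two starting identities drive the argument: the Mellin formula $\Gamma(s)Z(s)=\int_0^\infty t^{s-1}(\mathrm{Tr}\,e^{-t\Delta}-1)\,\dif t$ valid for $\Re(s)>1$, and the spectral expression $G(x,y)=\int_0^\infty\bigl(p_t(x,y)-\tfrac{1}{V}\bigr)\,\dif t$ for $x\neq y$, where $p_t$ is the heat kernel of $\Delta$. I would split each $t$-integral at a fixed $\epsilon>0$, extract the short-time singular behavior in closed form, and compare.

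On the Robin's-mass side, insert the Minakshisundaram--Pleijel parametrix $p_t(x,y)=\tfrac{1}{4\pi t}e^{-d(x,y)^2/(4t)}\bigl(u_0(x,y)+tu_1(x,y)+O(t^2)\bigr)$ with $u_0(x,x)=1$. The substitution $u=d(x,y)^2/(4t)$ turns the leading term into the exponential integral $\tfrac{1}{4\pi}E_1\bigl(d(x,y)^2/(4\epsilon)\bigr)$, and the small-argument expansion $E_1(x)=-\gamma-\log x+O(x)$ gives
\begin{equation*}
\int_0^\epsilon \tfrac{1}{4\pi t}\,e^{-d(x,y)^2/(4t)}\,\dif t=-\tfrac{1}{2\pi}\log d(x,y)+\tfrac{\log(4\epsilon)-\gamma}{4\pi}+O\bigl(d(x,y)^2/\epsilon\bigr).
\end{equation*}
The isolated $-\tfrac{1}{2\pi}\log d(x,y)$ exactly recovers the singular part of $G$ in \eqref{e.parametricsgf}; subtracting it, taking the diagonal limit (the $u_1$ and $u_0-1$ contributions are $O(\epsilon)$ after integration), and integrating over $M$ yields $\int_M m\,\dif V=\tfrac{V}{4\pi}(\log(4\epsilon)-\gamma)+\int_\epsilon^\infty(\mathrm{Tr}\,e^{-t\Delta}-1)\,\dif t+O(\epsilon)$.

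On the zeta side, the analogous splitting of $\Gamma(s)Z(s)$ together with the heat-trace asymptotic $\mathrm{Tr}\,e^{-t\Delta}-1=\tfrac{V}{4\pi t}+O(1)$ produces the expected simple pole of residue $V/(4\pi)$ at $s=1$. Expanding $\epsilon^{s-1}=1+(s-1)\log\epsilon+\cdots$ and $\Gamma(s)=1-\gamma(s-1)+\cdots$, and reading off the constant term in the Laurent expansion of $Z(s)-\tfrac{V/(4\pi)}{s-1}$, gives $\tilde Z(1)=\tfrac{V\gamma}{4\pi}+\tfrac{V}{4\pi}\log\epsilon+\int_\epsilon^\infty(\mathrm{Tr}\,e^{-t\Delta}-1)\,\dif t+o(1)$. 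Subtracting the two expressions, the unwieldy $\int_\epsilon^\infty$ pieces cancel exactly, and sending $\epsilon\to 0$ collapses everything to $\tilde Z(1)-\int_M m\,\dif V=\tfrac{V}{4\pi}(2\gamma-\log 4)=\tfrac{V}{4\pi}(-2\log 2+2\gamma)$. The main bookkeeping obstacle is that the Euler constant $\gamma$ enters twice -- once from the $E_1$-expansion on the Green's side and once from $\Gamma'(1)$ on the zeta side -- and one must verify that the $\log\epsilon$ and $O(\epsilon)$ dependences introduced by the arbitrary cutoff cancel cleanly, leaving a manifestly $\epsilon$-independent answer.
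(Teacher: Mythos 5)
The paper does not prove this proposition---it is quoted from Morpurgo and Steiner---so there is no internal proof to compare against; your heat-kernel/Mellin argument is the standard route taken in those references, and it is correct. The two pillars are right: $\Gamma(s)Z(s)=\int_0^\infty t^{s-1}(\mathrm{Tr}\,e^{-t\Delta}-1)\,\dif t$, and $G(x,y)=\int_0^\infty\bigl(p_t(x,y)-\tfrac1V\bigr)\,\dif t$, which is the Green's function with precisely the normalization \eqref{e.normalizegreensfn} because $\int_M p_t(x,y)\,\dif V(x)=1$. Your substitution producing $\tfrac{1}{4\pi}E_1\bigl(d^2/(4\epsilon)\bigr)$ and the expansion $E_1(x)=-\gamma-\log x+O(x)$ correctly isolate the $-\tfrac{1}{2\pi}\log d(x,y)$ singularity of \eqref{e.parametricsgf} together with the constant $\tfrac{1}{4\pi}(\log(4\epsilon)-\gamma)$, and the final arithmetic $\gamma-\log 4+\gamma=-2\log2+2\gamma$ reproduces \eqref{e.massisspecdensity}. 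One point where you are actually more careful than the paper, and which deserves an explicit sentence: Definition \ref{d.regzeta} subtracts $\tfrac{1}{s-1}$, which only makes sense when $V=4\pi$ so that the residue of $Z(s)$ at $s=1$ equals $1$; for general $V$ (as the proposition is stated) one must subtract $\tfrac{V/(4\pi)}{s-1}$, which is what you do when you read off the constant term of $Z(s)-\tfrac{V/(4\pi)}{s-1}$. The remaining items to flag as routine but necessary are the exponential decay of $\mathrm{Tr}\,e^{-t\Delta}-1$ at $t\to\infty$ (so the tail integral is entire in $s$ and the $\epsilon$-splitting is legitimate), the uniformity of the $O(\epsilon)$ terms in $x$ so that integration over $M$ is allowed, and the observation that the $u_0-1=O(d^2)$ and $tu_1$ parametrix corrections vanish in the diagonal limit after the $t$-integration. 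None of these is a gap; the proof stands.
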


\section{Hide-and-seek and Kemeny's Constant on Markov Chains}\label{sec.hideseekmarkov}
\paragraph{Expected first hitting times via the Green's function}
In order to analyze the expected duration of the hide-and-seek games on a Markov chain that were explained in Items \ref{i.gameI} and \ref{i.gameII} of the Introduction, we need to know what the expected first hitting times are to get from a state $i$ to a state $j$.  After defining the hitting times, for expository purposes, we will formulate the system of equations that the hitting times must satisfy, we will then compute these hitting times in terms of the Green's function.  A good reference for the relation between the expected first hitting times and the Green's function on Markov chains is the Grinstead and Snell's text on Probability, \cite{Grinsteadsnell}.
 \begin{definition}\label{def.exphitonmarkov}  Let $m_{ij}$ denote the {\it expected first hitting time} for a   Markov process to reach state $j$, given that it started in state $i$.  In other words, if $T$ is the random variable satisfying  $T=\inf\{n\geq 1 : X_n=j\}$, then
  \begin{equation}\label{e.deffirsthitmarkov}
  m_{ij}\stackrel{def}{=}\mathbb{E}[T|X_0=i] ,   \text{   the expected first hitting time.  \hspace{10pt} Take }m_{ii}=0
  \end{equation}
\end{definition}
Now, in order to see the relation between the Green's function and the first hitting times, we derive  \eqref{e.hittimeineqj}, which is satisfied by  the expected hitting time $m_{ij}$.  We  use a `first-step' analysis, \ie we will exploit the adage  that states, `A journey of a thousand miles begins with a single step':  if you are wandering on the Markov chain at random and you find yourself at  a state $i$, then, you will step to any state $k$ with probability $p_{ik}$, and the expected length of the remaining  journey to state $j$  will be the sum of that first step and the expected time it will take you to then get from $k$ to $j$, \ie  
\be
m_{ij}=1+\sum_{k}p_{ik}m_{kj} 
\ee
Consequently, the matrix of expected hitting times must satisfy:
\begin{eqnarray}
 [(I-P)M]_{ij}&=& 1 \text{ for }j\neq i  \label{e.hittimeineqj}  \\
m_{ii}&=& 0 \nonumber
\end{eqnarray}
By a similar first-step analysis for a return trip to $i$, one sees that we must have $1+\sum_{k=1}^n p_{ik} m_{ki}=\frac{1}{w_i}$, since if we take one step from state $i$, we will go to any other state $k$ with probability $k$, and then we expect the remainder of the journey to take $m_{ki}$.  Of course, from Proposition \ref{p.interpretw}, we know that the expected return time to $i$ is exactly $\frac{1}{w_i}$.     Consolidating all of the requirements on $m_{ij}$, we find that the expected hitting times must solve the following equations: 
\begin{eqnarray}\label{e.markovhittingtimes}
(I-P)M&=&\ones - (\frac{1}{w_i} e_i)=\ones-D\\
m_{ii} &=& 0 \nonumber 
\end{eqnarray}
Now, these equations should look familiar to us from the discussion of the Green's function in Section \ref{sec.background}.
\begin{lemma}\label{l.hittimegreensmarkov}[Grinstead and Snell \cite{Grinsteadsnell}]  For an irreducible ergodic  Markov chain where $G$ denotes the Green's function  defined in Proposition \ref{p.greensfnonmarkov}, and $M$ denotes the matrix of expected hitting times defined in \ref{def.exphitonmarkov}, then $M$ may be represented in terms of the Green's function:
\begin{equation}\label{e.hittimeingonmarkov}
m_{ij}=G_{jj} -G_{ij} 
\end{equation}
\end{lemma}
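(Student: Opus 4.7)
The plan is to reduce the lemma to a simple comparison of two matrix equations that both involve the operator $I - P$, and then to pin down the one-column-at-a-time ambiguity in the null space of $I - P$ using the normalization $m_{ii} = 0$.

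First I would compute $(I-P)M$ entrywise. For $i \neq j$, the first-step analysis recalled just above \eqref{e.hittimeineqj} gives $[(I-P)M]_{ij} = 1$. For the diagonal, I would invoke Proposition \ref{p.interpretw}, which identifies the expected return time to $i$ as $1/w_i$: expanding $1 + \sum_k p_{ik} m_{ki} = 1/w_i$ yields $[(I-P)M]_{ii} = m_{ii} - \sum_k p_{ik} m_{ki} = 1 - 1/w_i$. Packaging both cases, $(I-P)M = \ones - D$, where $\ones$ is the all-ones matrix and $D = \mathrm{Diag}(1/w_i)$. This is exactly the negative of the Green's function identity $(I-P)G = D - \ones$ from Proposition \ref{p.greensfnonmarkov}, so
\begin{equation*}
(I-P)(M + G) = 0.
\end{equation*}

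Now I would exploit the null-space structure from \eqref{e.nullrangeiminusp}: since the chain is irreducible and ergodic, the null space of $I - P$ is one-dimensional and spanned by $\mathbf{1}$. Applying this column by column to $M + G$, for each $j$ there is a constant $c_j$ with $m_{ij} + G_{ij} = c_j$ for every $i$. Setting $i = j$ and using the convention $m_{jj} = 0$ identifies $c_j = G_{jj}$, yielding the claimed formula $m_{ij} = G_{jj} - G_{ij}$.

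I expect no real obstacle beyond correctly assembling the right-hand side of $(I-P)M$; in particular, the diagonal entry is the only nontrivial input and it is handled by the expected return-time identity. The normalization $WG = 0$ chosen in Proposition \ref{p.greensfnonmarkov} plays no role in establishing the identity itself (it only fixes $G$ uniquely among valid Green's functions, and the difference $G_{jj} - G_{ij}$ is invariant under the ambiguity of adding a constant to each column of $G$), so the argument is essentially pure linear algebra once the first-step analysis is in place.
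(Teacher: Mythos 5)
Your proof is correct and follows exactly the route the paper intends: it derives $(I-P)M=\ones-D$ by first-step analysis (as the paper does just before the lemma), compares with $(I-P)G=D-\ones$, and resolves the column-wise constant ambiguity in the null space of $I-P$ using $m_{jj}=0$. The paper leaves this last comparison implicit (citing Grinstead and Snell), and your write-up supplies it cleanly, including the correct observation that the normalization $WG=0$ is irrelevant to the identity.
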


\paragraph{Kemeny's constant and density}
Now, we are finally in a position to analyze the expected duration of the hide-and-seek games on a Markov chain.
\paragraph{Game I}
Observe that for the first hide-and-seek game, in which a seeker begins at a deterministically chosen state $i$ and  must find a hider located at a randomly chosen state $j$  chosen with probability $w_j$, the expected duration of the game is given by the following quantity:
\begin{equation}\label{e.kemconst}
K^i\stackrel{def}{=}\sum_j m_{ij} w_j
\end{equation}
{\it A priori}, the expected duration of the hide-and-seek game should depend on the starting point $i$, as indicated by the $i$ dependence in  \eqref{e.kemconst}.
\begin{theorem}[Kemeny, \cite{Kemenysnell}]  The quantity $K_i$ is a constant $K$, independent of $i$.  \end{theorem}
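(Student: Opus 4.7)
The plan is to combine the Green's function representation of hitting times from Lemma \ref{l.hittimegreensmarkov} with the normalization of $G$ given in Proposition \ref{p.greensfnonmarkov}. Substituting $m_{ij}=G_{jj}-G_{ij}$ into the definition \eqref{e.kemconst} of $K^i$ and splitting the sum yields
\be
K^i \;=\; \sum_j w_j (G_{jj}-G_{ij}) \;=\; \sum_j w_j G_{jj} \;-\; \sum_j w_j G_{ij}.
\ee
The first term is manifestly independent of $i$, so the whole business reduces to showing that the second term vanishes for every $i$, i.e.\ that $Gw = 0$, where $w$ is regarded here as the column vector of stationary weights.

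To prove $Gw=0$, I would argue in two steps. First, apply the defining equation $(I-P)G = D-\mathbf{1}$ on the right to $w$. Since $Dw$ is the all-ones vector $\mathbf{1}$ (because $(Dw)_i = w_i/w_i = 1$) and the $n\times n$ matrix $\mathbf{1}$ applied to the probability vector $w$ also gives $\mathbf{1}$, we get $(I-P)(Gw) = 0$. Hence $Gw$ lies in the null-space of $I-P$, which by \eqref{e.nullrangeiminusp} is spanned by the constant vector, so $Gw = c\,\mathbf{1}$ for some scalar $c$. Second, apply the normalization $WG=0$ on the right to $w$: this gives $W(Gw)=0$, but also $W(c\,\mathbf{1}) = c\mathbf{1}$ since each row of $W$ is $w^T$ and $w^T\mathbf{1}=1$. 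Comparing forces $c=0$, so indeed $Gw=0$.

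Plugging back, $K^i = \sum_j w_j G_{jj}$, which depends only on the diagonal of $G$ and the equilibrium measure, not on $i$. Hence $K^i \equiv K$ as desired. Alternatively, one could bypass the direct verification of $Gw=0$ by invoking the probabilistic interpretation in Proposition \ref{p.interpretz}: since $Z = GD^{-1}$ has entries $Z_{ij} = G_{ij}w_j$, and the row sums of $Z$ vanish (as emphasized in the remark following Proposition \ref{p.interpretz}), we get $\sum_j w_j G_{ij} = \sum_j Z_{ij} = 0$ immediately. Either route works; the only ``obstacle'' is recognizing that the chosen normalization $WG=0$ is precisely what is needed to make the $i$-dependent piece drop out, which is exactly the payoff advertised when the normalization was imposed in Section \ref{sec.background}.
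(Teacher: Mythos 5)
Your proof is correct and follows essentially the same route as the paper: substitute $m_{ij}=G_{jj}-G_{ij}$ into $K^i=\sum_j m_{ij}w_j$ and observe that the $i$-dependent term $\sum_j w_j G_{ij}=\sum_j Z_{ij}$ vanishes, leaving $K=\sum_j w_j G_{jj}=\mathrm{Tr}(Z)$. The paper justifies the vanishing of the row sums of $Z$ only via the informal ``excess visits'' remark, so your direct verification that $Gw=0$ follows from $(I-P)G=D-\mathbf{1}$ together with the normalization $WG=0$ is a welcome tightening of the same argument rather than a different one.
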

We will refer to the constant  quantity, $K$ defined in Equation \ref{e.kemconst}  as {\it Kemeny's constant}. 

One can prove this theorem by showing that $K_i$ is harmonic, and therefore constant, as is done by Doyle  in \cite{Doyle_Kemeny}. Or, one can prove the theorem explicitly computing $K_i$ in terms of the Green's function.  By  substituting the expression for $m_{ij}$ that was given  in terms of the Green's function in \eqref{e.hittimeingonmarkov}, and observing that one is left with a quantity that is independent of $i$, namely, for the matrix $Z$ addressed in Proposition \ref{p.interpretz}, which is the signed measure corresponding to $G$, one finds:
\be
K=K^i=\sum_j Z_{jj}=Tr(Z)
\ee
 Of course, the trace of a matrix is a spectral invariant, which is the content of the next theorem.
 
  \begin{theorem}[Levene-Loizou, \cite{Leveneloizou}, Aldous \cite{Aldousfill}, Meyer \cite{Meyer}]  Kemeny's constant is a spectral invariant.  If the eigenvalues for $P$ are given by $1, \nu_1, \ldots \nu_{n-1}$, then,
\be
K=K^i=Tr(Z)=\sum_{j=1}^{n-1} \frac{1}{1-\nu_j}.
\ee
\end{theorem}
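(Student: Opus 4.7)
The plan is to diagonalize (or more precisely, simultaneously block-decompose) $P$ and $W$ using the invariant splitting induced by the Perron-Frobenius structure, read off the spectrum of $Z = (I-(P-W))^{-1} - W$, and take the trace.

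First, I would show that the subspace $V = \{v \in \mathbb{R}^n : w^T v = 0\}$ is $P$-invariant and complementary to $\mathrm{span}(\mathbf{1})$. Since $P\mathbf{1} = \mathbf{1}$ and $w^T P = w^T$, we have $w^T(Pv) = w^T v = 0$ for $v \in V$; and since the Markov chain is irreducible and ergodic, the eigenvalue $1$ of $P$ is simple, so $\mathbf{1} \notin V$ and we get $\mathbb{R}^n = \mathrm{span}(\mathbf{1}) \oplus V$. The restriction $P|_V$ has eigenvalues $\nu_1, \ldots, \nu_{n-1}$, all with $|\nu_j| < 1$.

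Next I would analyze how $W$ interacts with this splitting. Since every row of $W$ is $w^T$, we have $Wv = (w^T v)\mathbf{1}$ for every $v$, so $W|_V = 0$ while $W\mathbf{1} = \mathbf{1}$. Consequently $P - W$ preserves the splitting: it acts as $0$ on $\mathrm{span}(\mathbf{1})$ (because $P\mathbf{1} = W\mathbf{1} = \mathbf{1}$) and as $P|_V$ on $V$. Therefore $I - (P-W)$ acts as the identity on $\mathrm{span}(\mathbf{1})$ and as $I - P|_V$ on $V$; the latter is invertible because $1 \notin \mathrm{spec}(P|_V)$, which recovers the statement that the Neumann-series formula for $Z$ makes sense.

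Now I would read off the spectrum of $Z$. On $\mathrm{span}(\mathbf{1})$, both $(I-(P-W))^{-1}$ and $W$ act as the identity, so $Z$ acts as $0$ there. On $V$, $W$ vanishes so $Z$ acts as $(I - P|_V)^{-1}$, whose eigenvalues are $\frac{1}{1-\nu_j}$ for $j = 1, \ldots, n-1$. Taking the trace gives
\begin{equation*}
K = \mathrm{Tr}(Z) = 0 + \mathrm{Tr}\bigl((I - P|_V)^{-1}\bigr) = \sum_{j=1}^{n-1} \frac{1}{1 - \nu_j},
\end{equation*}
which is the desired identity.

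The only place care is required is handling the possibility that $P$ is not diagonalizable, or that the $\nu_j$ are complex. The decomposition argument above sidesteps this: everything is phrased in terms of the invariant subspace $V$ and the action of operators on it, so the trace of $(I - P|_V)^{-1}$ equals $\sum_j (1-\nu_j)^{-1}$ by the standard fact that trace sums eigenvalues with algebraic multiplicity (using Jordan form on $V$ if necessary). This is really the only subtle step; the rest is the algebraic identity $PW = WP = W$ already recorded in the proof of Proposition \ref{p.interpretz}, repackaged to expose the block structure.
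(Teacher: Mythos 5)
Your proof is correct and follows essentially the same route the paper sketches: starting from $K=\mathrm{Tr}(Z)$ and the explicit formula $Z=(I-(P-W))^{-1}-W$, you read off the spectrum of $Z$, and your invariant-subspace decomposition $\mathbb{R}^n=\mathrm{span}(\mathbf{1})\oplus\ker(w^T)$ rigorously supplies the details (including the non-diagonalizable case) that the paper delegates to its references. As a minor aside, your computation also corrects the paper's slip in listing the eigenvalues of $Z$ as $0,1-\nu_1,\ldots,1-\nu_{n-1}$ rather than $0,\tfrac{1}{1-\nu_1},\ldots,\tfrac{1}{1-\nu_{n-1}}$.
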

Heuristically, Kemeny's constant can be thought of as $Tr((I-P)^{-1})$, but of course the honest approach is to use the expression for the Green's function given in \eqref{e.greensforp} and observe that $K=Tr(Z)=Tr((I-(P-W))^{-1}-W)$, and $Z$ has eigenvalues $0, 1-\nu_1, 1-\nu_2, \ldots 1-\nu_{n-1}$.

Now, we may give the following interpretation of the Game I, in which the seeker's state  is chosen deterministically at a state $i$, and the  hider's state is chosen at random.
\begin{cor}\label{cor.expdurgameI}  On a Markov Chain, the expected duration of Game I, detailed in Item \ref{i.gameI}  of the Introduction, is a spectral invariant that is independent of the starting point $i$. 
\end{cor}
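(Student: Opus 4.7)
The plan is to compute $K^i=\sum_j m_{ij} w_j$ directly by substituting the Green's function formula from Lemma \ref{l.hittimegreensmarkov}, and to show that the $i$-dependent contribution drops out thanks to the normalization of $G$. Writing $m_{ij}=G_{jj}-G_{ij}$ decomposes the expected duration as
\[
K^i \;=\; \sum_j G_{jj}w_j \;-\; \sum_j G_{ij}w_j.
\]
The first sum has no $i$ dependence, so the entire question reduces to understanding the second sum.

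For the second sum, I would invoke the identity $Z=GD^{-1}$ from Proposition \ref{p.interpretz}. Since $D$ is diagonal with entries $1/w_j$, this reads entrywise as $Z_{ij}=G_{ij}w_j$, so $\sum_j G_{ij}w_j=\sum_j Z_{ij}$. That each row of $Z$ sums to zero has both a probabilistic explanation (the total excess time spent across all states by a walker started at $i$ versus an equilibrium walker must cancel, as noted in the remark following Proposition \ref{p.interpretz}) and a one-line algebraic verification: applying the explicit formula $Z=(I-(P-W))^{-1}-W$ to $\mathbf{1}$ and using $P\mathbf{1}=\mathbf{1}$ together with $W\mathbf{1}=\mathbf{1}$ gives first $(P-W)\mathbf{1}=0$, then $(I-(P-W))^{-1}\mathbf{1}=\mathbf{1}$, and finally $Z\mathbf{1}=0$. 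Either way, $K^i=\sum_j G_{jj}w_j=\sum_j Z_{jj}=\text{Tr}(Z)$, which is manifestly independent of $i$ and is visibly a spectral invariant.

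To identify $\text{Tr}(Z)$ with the eigenvalue expression $\sum_{k=1}^{n-1}\frac{1}{1-\nu_k}$, I would diagonalize $Z$ using the eigenstructure of $P$. For any right eigenvector $v$ of $P$ with eigenvalue $\nu_k\neq 1$, the left-right orthogonality relation $w^Tv=0$ (standard for distinct eigenvalues of $P$ versus $P^T$) forces $Wv=\mathbf{1}(w^Tv)=0$, so $(P-W)v=\nu_k v$ and therefore $Zv=\tfrac{1}{1-\nu_k}v$. Combined with $Z\mathbf{1}=0$, this exhibits the full spectrum of $Z$ and gives the stated trace formula. The only real step is the vanishing of $\sum_j Z_{ij}$; I do not expect any genuine obstacle, since that vanishing follows immediately from $P\mathbf{1}=W\mathbf{1}=\mathbf{1}$, and the rest is bookkeeping.
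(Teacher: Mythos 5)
Your proposal is correct and follows essentially the same route as the paper: substitute $m_{ij}=G_{jj}-G_{ij}$, observe that the $i$-dependent term is the row sum $\sum_j Z_{ij}=0$, and identify the remainder as $\mathrm{Tr}(Z)=\sum_{k}\frac{1}{1-\nu_k}$. You supply details the paper delegates to remarks and citations (the algebraic verification of $Z\mathbf{1}=0$ and the eigenvalue computation, the latter implicitly assuming $P$ diagonalizable), but the argument is the same.
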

\paragraph{Game II}
Now, in Game II, the seeker is located at a randomly chosen state, $i$ with probability $w_i$, and a determinstically located hider  must be found at $j$.  The expected duration of this game is given by:
\begin{equation}\label{e.pKnc}
K_j=\sum_i w_i m_{ij}
\end{equation}
\begin{lemma}  The quantity $K_j$ is a density for Kemeny's constant, and it is the diagonal  term of $G$, the Green's function defined in \ref{p.greensfnonmarkov}:
\be
K_j=G_{jj}
\ee
\end{lemma}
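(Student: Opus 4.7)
The plan is to chain together two facts already established in the paper: Lemma \ref{l.hittimegreensmarkov}, which expresses hitting times via the Green's function as $m_{ij}=G_{jj}-G_{ij}$, and the normalization condition $WG=0$ built into Proposition \ref{p.greensfnonmarkov}, which says that every column of $G$ is orthogonal to the equilibrium measure $w$.

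Concretely, I would start from the definition
\begin{equation*}
K_j = \sum_i w_i m_{ij}
\end{equation*}
and substitute the Green's function representation of $m_{ij}$ from Lemma \ref{l.hittimegreensmarkov}, yielding
\begin{equation*}
K_j = \sum_i w_i \bigl(G_{jj}-G_{ij}\bigr) = G_{jj}\sum_i w_i \;-\; \sum_i w_i G_{ij}.
\end{equation*}
The first term simplifies to $G_{jj}$ since $w$ is a probability measure, so $\sum_i w_i = 1$. For the second term, observe that $\sum_i w_i G_{ij}$ is precisely the $(k,j)$ entry of $WG$ for any row index $k$ (since every row of $W$ equals $w^T$). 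By the normalization $WG=0$ from Proposition \ref{p.greensfnonmarkov}, this sum vanishes. Combining the two pieces gives $K_j=G_{jj}$.

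There is really no obstacle: once Lemma \ref{l.hittimegreensmarkov} and the normalization $WG=0$ are in hand, the computation is a one-line algebraic manipulation. The only point worth emphasizing in the writeup is the conceptual content, namely that the choice of additive constant in the Green's function made in Proposition \ref{p.greensfnonmarkov} (which was justified probabilistically in Proposition \ref{p.interpretz}) is exactly what makes the $\sum_i w_i G_{ij}$ term drop out, thereby identifying the expected duration of Game II with the diagonal of $G$. The claim that $K_j$ is a density for Kemeny's constant is then immediate from the identity $K = \sum_j w_j K^j$ viewed against its Game I counterpart, since averaging $G_{jj}$ against $w_j$ recovers $\sum_j w_j G_{jj}$, which by the same trace manipulation as in the preceding theorem equals $\mathrm{Tr}(Z) = K$.
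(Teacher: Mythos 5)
Your proof is correct and is exactly the computation the paper intends: substitute $m_{ij}=G_{jj}-G_{ij}$ from Lemma \ref{l.hittimegreensmarkov} into $K_j=\sum_i w_i m_{ij}$ and use $\sum_i w_i=1$ together with the normalization $WG=0$. The paper in fact omits an explicit proof of this lemma, but your argument mirrors the substitution it carries out for Game I, and your closing observation that $\sum_j w_j G_{jj}=\sum_j Z_{jj}=\mathrm{Tr}(Z)=K$ correctly justifies the ``density'' claim.
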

Now, we may connect this density to our second hide-and-seek Game:
\begin{cor}\label{cor.expdurgameII}  The expected duration of hide-and-seek Game II, detailed in Item \ref{i.gameII}, is given by  the quantity $K_j$ of \eqref{e.pKnc}, and in general it will depend on the point $j$.  The expected duration of Game II is a density for Kemeny's constant (here the measure is  the stationary measure, $w$), and thus $K_j$  is a density for a spectral invariant.
\end{cor}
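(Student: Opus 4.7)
The plan is to unwind the definition of Game II, invoke the Lemma immediately preceding the corollary, and then integrate against the equilibrium measure $w$ to recover Kemeny's constant.

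First I would condition on the seeker's random starting state $i$, which is drawn with probability $w_i$.  By linearity of expectation, the expected duration of Game II ending deterministically at state $j$ is $\sum_i w_i\, m_{ij}$, and this is precisely $K_j$ by the defining formula \eqref{e.pKnc}.  The preceding Lemma then identifies $K_j$ with the diagonal Green's function value $G_{jj}$, which takes care of the first clause.

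Second, to establish the density property, I would compute $\sum_j w_j K_j$ and show it equals Kemeny's constant $K$.  The cleanest route goes through the fundamental matrix:  since $Z = G D^{-1}$ with $D^{-1} = \operatorname{diag}(w_j)$, we have $Z_{jj} = w_j G_{jj}$, so $\sum_j w_j K_j = \sum_j Z_{jj} = \operatorname{Tr}(Z) = K$.  An equivalent probabilistic route just swaps the order of summation: $\sum_j w_j K_j = \sum_i w_i \bigl(\sum_j w_j m_{ij}\bigr) = \sum_i w_i K^i = K$, where Kemeny's theorem collapses the inner sum to the $i$-independent constant $K$ before the outer sum is even evaluated.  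Either derivation shows that $K_j$ is a $w$-density for the spectral invariant.

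Finally, for the claim that $K_j$ genuinely depends on $j$ in general (in contrast to $K^i$), I would observe that $K_j = G_{jj}$ and note that the diagonal of $G$ is generically non-constant:  any small asymmetric chain, for example a three-state chain with one vertex strictly harder to reach than the others, exhibits distinct values of $K_j$ at different states.  The only step requiring care is the normalization bookkeeping behind $\sum_j w_j G_{jj} = \operatorname{Tr}(Z)$, which hinges on the specific choice $Z = G D^{-1}$ fixed in Proposition \ref{p.interpretz}; beyond this everything follows immediately from the preceding Lemma and Kemeny's theorem, so there is no real analytic obstacle.
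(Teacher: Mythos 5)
Your proposal is correct and follows essentially the same route the paper intends: unwind the definition to get $\sum_i w_i m_{ij}=K_j$, use the preceding Lemma's identity $K_j=G_{jj}$ (which rests on $m_{ij}=G_{jj}-G_{ij}$ and the normalization $WG=0$), and then verify the density property via $\sum_j w_j K_j=\sum_j Z_{jj}=\mathrm{Tr}(Z)=K$. The paper leaves these computations implicit, so your write-up simply supplies the details it omits.
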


\section{Hide-and-seek and the regularized trace on Surfaces}\label{sec.hideseeksurface}
Now, we would like to consider the same two hide-and-seek Games on a surface, but we must begin by discussing a stochastic process.  The seeker's path will follow the laws for Brownian motion.  

 \paragraph{Brownian Motion on manifolds and expected first hitting times via Green's functions} (See \cite{Hsu} for details concerning the construction of Brownian Motion on a manifold).   In order to compute the expected duration of the hide-and-seek games, we must consider the behavior of a Brownian `seeker'.    On a  surface, $(M, g)$, one can construct a Brownian motion $\{\Sigma_t^x\}_{t\geq 0}$,  which begins at a  point $x\in M$, \ie $\Sigma_0^x=x$.   Since a closed compact surface is stochastically complete, the Brownian motion will be recurrent.  While the probability of hitting any particular point $y$ is $0$ almost everywhere, the probability of hitting the boundary of an $\epsilon$-ball around $y$ will be $1$.  We let  the random variable $\tau^x(B_\epsilon(y))$ denote the first hitting time for a Brownian motion beginning at $x$ to reach an $\epsilon$ ball around $y$, {\it i.e.,} 
 \be
 \tau^x(B_\epsilon(y)) =\inf \{t>0: \Sigma_t^x\in \partial B_\epsilon(y)\}, 
 \ee
 and we write  the expected value of $\tau^x$ for a given epsilon ball  as $H_\epsilon(x,y)=\mathbb{E}[\tau^x (B_\epsilon(y)]$. Recall that $\Delta$ is the analg to the operator $(I-P)$ in the Markov chain setting, so the quantities $H(x,y)$ and $m_{ij}$ are the first hitting times for surfaces and Markov chains respectively, and it should come as no surprise that $H(x,y)$ solves a partial differential equation which is the continuous space analog of \eqref{e.hittimeineqj}.    In particular, it is well-known 
 that $H_\epsilon(x,y)$ solves  the following boundary value problem\footnote{Here we consider a re-scaled Brownian motion to follow the geometer's convention for the Laplacian}:
\begin{equation} \label{e.hittimepde}
 \left\{ \begin{array}{r@{\quad \quad\quad }l}
\Delta H_\epsilon(x,y)=1 & x\in M\setminus B_\epsilon(y)\\ 
H_\epsilon(x,y)=0 & x\in  \partial [M\setminus B_\epsilon(y)] 
\end{array} 
\right.
\end{equation}
Now, the expected hitting time, $H_\epsilon(x,y)$ can be written in terms of the Green's function, as stated in the following lemma, which is the continuous analog to Lemma \ref{l.hittimegreensmarkov}  on Markov chains.  Notice that on surfaces we need to introduce a $\log \epsilon$ regularization, which reflects the fact that it would take infinitely long to actually hit the point $y$.
\begin{lemma}\label{l.hittimegreenssurf}
On a surface $(M,g)$, if $H_\epsilon(x,y)$ is the expected value of the first hitting time for a Brownian motion beginning   at $x$ to reach   $B_\epsilon(y)$  satisfying \eqref{e.hittimepde}, and $G(x,y)$ is the Green's function for the Laplacian characterized in Proposition \ref{p.greensfnonsurface}, then $H_\epsilon (x,y)$ may be written uniquely in terms of $G(x,y)$ and the robin's mass $m(y)$ appearing in \eqref{e.parametricsgf} as follows: 
\begin{equation}\label{e.epsilonhittime}
 \left\{ \begin{array}{r@{\quad \quad\quad }l} 
 H_\epsilon(x,y)=-V G(x,y)-\frac{V}{2\pi} \log \epsilon +V m(y)+u_\epsilon(x) & \hspace{-20pt} x\in M\hspace{-4pt}\setminus\hspace{-4pt} B_\epsilon(y)\\
 H_\epsilon(x,y)=0 &\hspace{-20pt}  x\in \bar{B_\epsilon(y)}
 \end{array}
 \right.
 \end{equation}
Here,   $u^\epsilon(x)$  is  harmonic on $M\setminus{ B_\epsilon(y)}$ and  $\|u^\epsilon\|_{C^0}=O(\epsilon)$.  
  \end{lemma}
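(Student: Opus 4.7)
The plan is to construct an explicit candidate for $H_\epsilon$ built from $G(x,y)$, verify that it satisfies the PDE $\Delta H_\epsilon = 1$ on $M\setminus B_\epsilon(y)$ up to the correct boundary behavior, and then absorb the mismatch on $\partial B_\epsilon(y)$ into a harmonic correction $u_\epsilon$ whose size is controlled by the maximum principle.

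Concretely, set
\begin{equation*}
\tilde{H}_\epsilon(x) := -V\,G(x,y) - \tfrac{V}{2\pi}\log\epsilon + V\,m(y).
\end{equation*}
The first step is to differentiate: using the distributional equation \eqref{e.distreqngxy}, for $x\in M\setminus B_\epsilon(y)$ the delta contribution vanishes and $\Delta_x(-V\,G(x,y)) = -V(\delta_y(x)-1/V) = 1$, so $\Delta \tilde{H}_\epsilon \equiv 1$ on the complement of the $\epsilon$-ball. The added constants $-\tfrac{V}{2\pi}\log\epsilon + Vm(y)$ are harmonic and chosen precisely to cancel the logarithmic singularity and Robin mass that appear when $x$ sits on $\partial B_\epsilon(y)$.

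Next I would examine $\tilde{H}_\epsilon$ on $\partial B_\epsilon(y)$. By the parametrix \eqref{e.parametricsgf} applied with $y$ fixed and $x\to y$ (using symmetry of $G$ and smoothness of the Robin mass to replace $m(x)$ by $m(y) + O(d(x,y))$), for $x \in \partial B_\epsilon(y)$ we have $d(x,y)=\epsilon$ and
\begin{equation*}
G(x,y) = -\tfrac{1}{2\pi}\log\epsilon + m(y) + O(\epsilon).
\end{equation*}
Substituting into $\tilde{H}_\epsilon$ the $\log\epsilon$ and $m(y)$ contributions cancel exactly, leaving $\tilde{H}_\epsilon(x) = O(\epsilon)$ uniformly on $\partial B_\epsilon(y)$. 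Thus $\tilde{H}_\epsilon$ is a near-solution of the boundary value problem \eqref{e.hittimepde}: it satisfies the interior equation on the nose but fails the Dirichlet condition by $O(\epsilon)$.

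Finally, define $u_\epsilon(x) := H_\epsilon(x,y) - \tilde{H}_\epsilon(x)$ on $M\setminus B_\epsilon(y)$. Since both $H_\epsilon$ and $\tilde{H}_\epsilon$ solve $\Delta = 1$ on this domain, $u_\epsilon$ is harmonic there; and since $H_\epsilon$ vanishes on $\partial B_\epsilon(y)$ while $\tilde H_\epsilon = O(\epsilon)$ there, the boundary data for $u_\epsilon$ are $O(\epsilon)$ uniformly. The maximum principle for harmonic functions on the compact domain $M\setminus B_\epsilon(y)$ then gives $\|u_\epsilon\|_{C^0} = O(\epsilon)$, completing the decomposition \eqref{e.epsilonhittime} and ensuring uniqueness by standard elliptic theory (any two solutions differ by a harmonic function vanishing on the boundary, hence identically zero).

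The main obstacle I anticipate is making the $O(\epsilon)$ estimate on $\partial B_\epsilon(y)$ genuinely uniform: the parametrix for $G$ carries a remainder $O(d(x,y))$ whose implied constant must be controlled as $x$ traces the $\epsilon$-sphere, and one must also justify the replacement $m(x) = m(y) + O(\epsilon)$ by invoking smoothness of the Robin mass away from conical points. Once that uniformity is secured, the maximum principle step is automatic and the representation \eqref{e.epsilonhittime} follows.
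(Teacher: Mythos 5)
Your proposal is correct and follows essentially the same route as the paper: build the candidate $-VG(x,y)-\frac{V}{2\pi}\log\epsilon+Vm(y)$, check it solves $\Delta(\cdot)=1$ off the ball, use the parametrix \eqref{e.parametricsgf} to see the boundary values are $O(\epsilon)$, and control the harmonic correction $u_\epsilon$ by the maximum principle. The only cosmetic difference is that you define $u_\epsilon$ as the difference between the true hitting time and the candidate, whereas the paper constructs it as the harmonic extension of the $O(\epsilon)$ boundary data; the two presentations are equivalent.
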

  \begin{proof}
  While one only needs to verify that $H_\epsilon(x,y)$ defined in Equation \ref{e.epsilonhittime} satisfies \eqref{e.hittimepde}, we believe it is more illuminating to prove the lemma by demonstrating how to construct the correct function, $H_\epsilon(x,y)$.
We  begin by observing that a good candidate for solving the  hitting time problem in \eqref{e.hittimepde} would be $- G(x,y)$, since we know from \eqref{e.distreqngxy} that if $x\neq y$, we have (for $V=Vol(M, g)$) 
  \be
  \Delta_x^{distr} G(x,y)\stackrel{\dif V}{=}-\frac{1}{V} \text{  for } x\neq y
  \ee
Of course, away from $y$, the right-hand side of the equation is smooth, so it holds point-wise for $x\neq  y$, and we must re-scale the equation so that we are working with the probability measure, which is given by $\frac{\dif V}{Vol(M)}$:
  \be
  \Delta_y\left(-Vol(M) G(x,y)\right) \stackrel{\frac{\dif V}{Vol(M)}}{=} 1  \text{  on } M\setminus \overline{B_{\epsilon}(y)}
  \ee
  Now, while we have arranged for  the Laplacian to behave  properly on $M\setminus \overline{B_{\epsilon}(y)}$,   but of course, we need to normalize our candidate it so that the hitting time is $0$ at the boundary, so we will have  $H^\epsilon_y(x)=-Vol(M)G(x,y)+(correction)$.  From the parametrics of the Green's function given in \eqref{e.parametricsgf}, we know that for $x\in \partial B_\epsilon(y)$, we have:
  \be
  -Vol(M) G(x,y)=\frac{V}{2\pi} \log d(x, y) -Vm(y)+O(d(x, y))=\frac{V}{2\pi} \log \epsilon -Vm(y)+O(\epsilon)
  \ee
  Now,  by correcting $-Vol(M)G(x,y)$ with the logarithmic term and the Robin's mass term, we have the hitting time that solves the correct differential equation on the interior of our region, and  it will be  correct up to a function that is $O(\epsilon)$ on the boundary of the ball.  In order to obtain an exact solution we need to deal with the $O(\epsilon)$ correction term, without affecting the differential equation on the interior.  In particular, we must find a function $u_\epsilon(x)$ satisfying:
  \bea
u_\epsilon(x)=  -Vol(M) G(x,y)-(\frac{V}{2\pi} \log \epsilon -Vm(y)) && x\in \partial B_\epsilon(y)\\
\Delta u_\epsilon(x)=0 &&  x\in M\setminus \overline{B_\epsilon(y)}
\eea
From standard PDE theory on a surface (see, for example, \cite{Taylor1}), it is known that we can solve this equation for the harmonic extension $u_\epsilon(x)$, and $u_\epsilon(x)$ will be unique.  From the maximum principle, we know that $u_\epsilon$ will attain its maximum on the boundary, $ \partial B_\epsilon(y)$, so we know that
\be
\|u_\epsilon\|_{C^0(M\setminus \overline{B_\epsilon(y)}}=O( \epsilon)
\ee
Now, since $\Delta u_\epsilon(x)=0$, we may add it to the modified Green's function without disrupting the differential equation on the interior, so we now know that the function $H_\epsilon(x,y)$ defined in Equation \ref{e.epsilonhittime} does satisfy the Dirichlet problem for the expected hitting times.  The function $H_\epsilon(x,y)$ is unique and positive because $H_{\epsilon}(x,y)$ is sub-harmonic  and $H_\epsilon (x,y)=0$ on the boundary, so by the maximum principle, since it is not constant,  it must be positive on  $M\setminus \overline{B_{\epsilon}(y)}$.
  \end{proof}
  Now that we know the expected hitting time for an individual point, we may consider the two hide-and-seek games.
  \paragraph{Hide-and-seek on surfaces}
  We summarize the behavior of the two hide-and-seek games in the following theorem.
  
 \begin{theorem}\label{thm.hideandseek} On a surface $(M, g)$ with volume $V$ and Robin's mass $m(x)$  defined in  \eqref{e.parametricsgf},  the two hide-and-seek games described in the Introduction exhibit the following behavior on $M$.
 \ben
 \item The expected duration for Game I, which starts deterministically at $x$, is \\ 
 \be 
\int_{M\setminus B_\epsilon(x)} H_{\epsilon}(x,y) \dif V(y) =-\frac{V}{2\pi} \log \epsilon +\int_M m(y)\dif V(y)+O(\epsilon),
\ee
 and the expected duration for Game I  is independent of  the starting point, $x$.
\item    The expected duration for Game II, which ends deterministically at $y$  is  \\ $\int_{M\setminus B_\epsilon(y)} H_{\epsilon}(x,y) \dif V(x)  =-\frac{V}{2\pi} \log \epsilon +V m(y)+O(\epsilon)$.
\een
\end{theorem}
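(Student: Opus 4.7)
The plan is to substitute the closed-form representation of $H_\epsilon(x,y)$ from Lemma \ref{l.hittimegreenssurf} into each integral and estimate the resulting pieces using the properties of the Green's function in Proposition \ref{p.greensfnonsurface}. Four ingredients suffice: (a) the normalization $\int_M G(x,y)\,\dif V(y) = 0$; (b) the logarithmic singularity, giving $\int_{B_\epsilon(x)} |G(x,y)|\,\dif V(y) = O(\epsilon^2 |\log \epsilon|)$ by integrating $\log r$ in geodesic normal coordinates; (c) smoothness of the Robin's mass, yielding $\int_{B_\epsilon(x)} m(y)\,\dif V(y) = O(\epsilon^2)$; and (d) the uniform bound $\|u_\epsilon\|_{C^0} = O(\epsilon)$ from the lemma.

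For Game I, I fix $x$ and integrate $H_\epsilon(x,y)\,\dif V(y)$ over $M\setminus B_\epsilon(x)$, splitting into the four pieces dictated by Lemma \ref{l.hittimegreenssurf}. The Green's function piece $-V\int_{M\setminus B_\epsilon(x)} G(x,y)\,\dif V(y)$ equals $V\int_{B_\epsilon(x)} G(x,y)\,\dif V(y) = O(\epsilon^2|\log\epsilon|)$ by (a) and (b). The constant piece $-\frac{V}{2\pi}\log\epsilon \cdot \mathrm{Vol}(M\setminus B_\epsilon(x))$ produces the advertised logarithmic leading term modulo $O(\epsilon^2|\log\epsilon|)$. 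The Robin's mass piece contributes $\int_M m\,\dif V$ modulo $O(\epsilon^2)$ by (c), and the harmonic correction piece is $O(\epsilon)$ by (d). Collecting yields the claimed asymptotic; since every surviving leading term manifestly fails to involve $x$, Game I is independent of the starting point to order $\epsilon$. For Game II the argument is parallel with $x$ and $y$ swapped: the Green's function piece vanishes again by symmetry $G(x,y)=G(y,x)$ combined with (a); the constant and $u_\epsilon$ pieces are handled identically; and the essential contrast is that $V m(y)$ is now \emph{constant} in the integration variable, so it multiplies $\mathrm{Vol}(M\setminus B_\epsilon(y))$ to produce the $V m(y)$ leading term in place of the integrated $\int_M m\,\dif V$.

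The only technical point worth flagging — and it is not really an obstacle — is uniformity of the error estimates in the remaining basepoint, so that the various remainders may be safely consolidated into a single $O(\epsilon)$ term. Uniformity of the Green's function bounds is standard on a compact surface since $G$ is smooth off the diagonal with controlled parametrics \eqref{e.parametricsgf} and $m$ is smooth. Uniformity of $\|u_\epsilon\|_{C^0}$ in the opposite basepoint follows from the maximum principle applied to the harmonic extension of the $O(\epsilon)$ boundary data identified in the proof of Lemma \ref{l.hittimegreenssurf}.
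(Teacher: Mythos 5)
Your proposal is correct and follows essentially the same route as the paper's own proof: substitute the representation of $H_\epsilon(x,y)$ from Lemma \ref{l.hittimegreenssurf}, kill the Green's function piece via the normalization \eqref{e.normalizegreensfn}, absorb the $u_\epsilon$ term using its $O(\epsilon)$ bound, and read off the log term and the Robin's mass contribution (integrated in Game I, pointwise in Game II). Your accounting of the error terms (the $O(\epsilon^2|\log\epsilon|)$ near-diagonal estimate and the uniformity remarks) is in fact slightly more careful than the paper's, which simply lumps everything into $O(\epsilon)$.
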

\begin{proof}
In order to verify the first statement, we must allow the point $y$ to be randomly chosen with respect to the probability measure $\frac{\dif V(y)}{V}$,  so we will integrate the $\epsilon$-regularized hitting time, $H_\epsilon(x,y)$ given in Lemma \ref{l.hittimegreenssurf} with respect to $y$.  Note that if according to the rules of our game, we have already found any point $y\in B_\epsilon(x)$, so we need only integrate over $M\setminus B_\epsilon(x)$:
 \begin{multline}
\mathbb{E}[\text{duration Game I}]=\int\limits_M H^{\epsilon}(x, y)\frac{ \dif V(y)}{V}\\
 =\int\limits_{M\setminus B(x, \epsilon)} H^\epsilon(x,y)\frac{\dif V(y)}{V}=\int\limits_{M\setminus B(x, \epsilon)}\left( - G(x, y)-\frac{V}{2\pi} \log \epsilon + V m(y)+u_\epsilon(y) \right)\frac{\dif V(y)}{V}
 \end{multline}
 Now, since we have normalized the Green's function to integrate to $0$, the contribution of the integral of $G(x,y)$ over $M\setminus B(x, \epsilon)$ will be $O(\epsilon)$, and we can lump this contribution together with that of the harmonic function $u_\epsilon(y)$, for which the $C^0$ norm is also $O(\epsilon)$:
 \be
 \mathbb{E}[\text{ duration Game I}]=O(\epsilon)-\frac{V}{2\pi} \log \epsilon +\int\limits_{M\setminus B(x, \epsilon)} m(y) \dif V(y)=-\frac{V}{2\pi} \log \epsilon +\int\limits_{M} m(y) \dif V(y) +O(\epsilon)
\ee
 Note that this term is clearly constant with respect to $x$, since anything involving $x$ has been integrated away (and we can estimate the $O(\epsilon)$ term uniformly).  
 
Now, in order to obtain the expected duration of Game II, we do almost the same calculation, however since the seeker is randomly chosen, we must integrate with respect to  $\frac{\dif V(x)}{V}$.  
\bea
\int\limits_M H^{\epsilon}(x,y) \frac{\dif V(x)}{V}&=&\int\limits_{M\setminus B(y, \epsilon)} H^{\epsilon}(x,y) \frac{\dif V(x)}{V}\\
&=&\hspace{-8pt}\int\limits_{M\setminus B(y, \epsilon)} \hspace{-8pt} \left(  - G(x, y)-\frac{V}{2\pi} \log \epsilon + V m(y)+u_\epsilon(y) \right)\frac{\dif V(x)}{V}\\
& = &-\frac{V}{2\pi} \log \epsilon + V m(y) + O(\epsilon)
\eea
\end{proof}
Now, recalling that the integral of the Robin's mass is related to the regularized trace on a surface, as stated in Proposition \ref{p.massisspecdensity}, so we see that we have developed a probabilistic interpretation for the regularized trace  and its density by way of the two hide-and-seek games.

\begin{cor}  On surfaces, the expected duration of Game I  is  the regularized trace for $\Delta^{-1}$,  up to an $\epsilon$-regularization.  The expected duration of Game II on surfaces is the density for the $\epsilon$-regularized trace.\end{cor}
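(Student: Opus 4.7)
The plan is to combine the two ingredients we already have in hand: the explicit formulas for the expected durations of the two games from Theorem \ref{thm.hideandseek}, and the identification of the regularized trace with the integrated Robin's mass from Proposition \ref{p.massisspecdensity}. The corollary is essentially a bookkeeping statement, so the ``proof'' amounts to reading one formula into the other.

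First I would handle Game I. Theorem \ref{thm.hideandseek}(1) gives
\begin{equation*}
\mathbb{E}[\text{duration Game I}] = -\frac{V}{2\pi}\log\epsilon + \int_M m(y)\,\dif V(y) + O(\epsilon),
\end{equation*}
and Proposition \ref{p.massisspecdensity} rewrites $\int_M m(y)\,\dif V(y)$ as $\tilde Z(1) - \tfrac{V}{4\pi}(-2\log 2 + 2\gamma)$. Substituting yields
\begin{equation*}
\mathbb{E}[\text{duration Game I}] = \tilde Z(1) \;-\; \frac{V}{2\pi}\log\epsilon \;-\; \frac{V}{4\pi}(-2\log 2 + 2\gamma) + O(\epsilon),
\end{equation*}
so the expected duration differs from $\tilde Z(1)$ only by terms that depend on $V$ and $\epsilon$ but not on the starting point $x$. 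This is what is meant by ``the regularized trace, up to an $\epsilon$-regularization.''

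For Game II I would do the analogous substitution: Theorem \ref{thm.hideandseek}(2) gives
\begin{equation*}
\mathbb{E}[\text{duration Game II}] = -\frac{V}{2\pi}\log\epsilon + V\,m(y) + O(\epsilon),
\end{equation*}
and since Proposition \ref{p.massisspecdensity} identifies $m(y)$ (up to a universal additive constant depending only on $V$) as the density for $\tilde Z(1)$ with respect to $\dif V$, the expected duration of Game II, minus the $x$-independent $-\tfrac{V}{2\pi}\log\epsilon$ term, is exactly $V$ times that density. Integrating Game II's duration over $y$ against $\dif V(y)/V$ then recovers Game I's duration up to $O(\epsilon)$, which is consistent and provides a sanity check.

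There is really no main obstacle here: the corollary is a restatement of Theorem \ref{thm.hideandseek} in the language of Proposition \ref{p.massisspecdensity}. The only place where one has to be a little careful is in tracking the universal $\epsilon$- and $V$-dependent additive constants so that the phrase ``up to an $\epsilon$-regularization'' is justified, and in noting that the $O(\epsilon)$ remainders can be estimated uniformly in $x$ (for Game I) and $y$ (for Game II), which was already observed in the proof of Theorem \ref{thm.hideandseek}.
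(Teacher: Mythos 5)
Your proposal is correct and matches the paper's own (essentially one-sentence) justification: the corollary is obtained by reading Theorem \ref{thm.hideandseek} through Proposition \ref{p.massisspecdensity}, exactly as you do. Your explicit tracking of the $V$- and $\epsilon$-dependent constants, and the consistency check that averaging Game II over $y$ recovers Game I, go slightly beyond what the paper writes but introduce nothing new in substance.
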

\paragraph{Application}
Now, we can use the probabilistic intuition to consider how the regularized trace for different surfaces might compare.  At  a strictly intuitive level, we indicate how one might apply this to a family of rectangular tori with a flat metric, where area is fixed, and the lengths are getting longer and longer.  If one thinks about the regularized trace in terms of the hide-and-seek game, it seems quite plausible that as the rectangle is becoming longer (and therefore skinnier since the area is fixed), there will be more and more area that is farther away from a given point.  Therefore, it will take longer to reach the regions that are increasingly far away, so as the fundamental rectangles become longer and skinnier, the regularized trace becomes larger.  As demonstrated in \cite{skinnytori}, by connecting the regularized trace with the Dedekind eta function, one can verify that this behavior holds.  Moreover, the probabilistic intuition suggests that if one `blows a bubble' into a long-skinny torus, than most of the area will be closer to a given point, so the regularized trace should be smaller, and in particular  it should be close to that of the sphere.  This phenomenon does indeed occur, and it is described in \cite{skinnytori}.      

\bibliographystyle{hplain}

\bibliography{hide}
  
\end{document}